\newtheorem{theorem}{Theorem}[section]
\newtheorem{thm}[theorem]{Theorem}
\newtheorem{lem}[theorem]{Lemma}
\newtheorem{cor}[theorem]{Corollary}
\makeatletter \@addtoreset{equation}{section}
\newcommand{\qbinom}[2]{\genfrac{[}{]}{0pt}{}{#1}{#2}}
\newcommand{\Qrk}{Q(h \Mid \br;\bk)}
\newcommand{\Mid}{\:|\:}  
\newcommand{\br}{\mathbf r}
\newcommand{\bk}{\mathbf k}
\newcommand{\Erk}{E_{\br,\bk}}
\newcommand{\lrq}[3]{\left(\frac{#1}{#2}#3\right)}
\def\CT{\mathop{\mathrm{CT}}}
\def\LC{\mathop{\mathrm{LC}}}
\begin{document}

\def\CC{\mathbb{C}}
\title{A Family of $q$-Dyson Style Constant Term Identities}
\author{
  {\vspace{0.2cm}}
  {Lun Lv$^1$, Guoce Xin$^2$, AND Yue Zhou$^3$}\\
  {\small $^{1,2,3}$Center for Combinatorics, LPMC}\\
  {\vspace{0.2cm}}
  {\small Nankai University, Tianjin 300071, P.R. China}\\
  { \small $^1$lvlun@mail.nankai.edu.cn\ \ \ $^2$gxin@nankai.edu.cn\ \ \ $^3$zhouyue@mail.nankai.edu.cn}\\
 }

\date{June 6, 2007}
\maketitle

\begin{abstract}
By generalizing Gessel-Xin's Laurent series method for proving the
Zeilberger-Bressoud $q$-Dyson Theorem, we establish a family of
$q$-Dyson style constant term identities. These identities give
explicit formulas for certain coefficients of the $q$-Dyson product,
including three conjectures of Sills' as
special cases and  generalizing Stembridge's first layer formulas for characters of
$SL(n,\mathbb{C})$.
\end{abstract}

{\small \emph{Mathematics Subject Classification}. Primary 05A30,
secondary  33D70.}

{\small \emph{Key words}. $q$-series, Dyson conjecture, Laurent
series, partial fractions, constant term}

\section{Introduction}

\subsection{Notation}
Throughout this paper, we let $n$ be a nonnegative integer, and use
the following symbols:
\begin{multline*}
\shoveright{\mathbf{a}:=(a_0,a_1,\ldots,a_n),\\}
\shoveright{a:=a_1+a_2+\cdots+a_n,\\}
\shoveright{\mathbf{x}:=(x_0,x_1,\ldots,x_n),\\}
\shoveright{(z)_n:=(1-z)(1-zq)\cdots(1-zq^{n-1}),\\}
\shoveright{D_n(\mathbf{x},\mathbf{a},q):=\prod_{0\leq i<j\leq n}
\left(\frac{x_i}{x_j}\right)_{\!\!a_i}
\left(\frac{x_j}{x_i}q\right)_{\!\!a_j},\quad\quad\quad\quad \quad \quad \quad \quad \quad \quad\quad\quad(q\mbox{-Dyson product})\\}
\shoveright{\CT_{\mathbf{x}}F(\mathbf{x})\ \mbox{means to take the
constant term in the $x$'s of the series $F(\mathbf{x})$.}}
\end{multline*}
Since our main objective in this paper is to evaluate the constant term
of the form
$$\frac{x_{j_1}^{p_1}\cdots x_{j_{\nu}}^{p_{\nu}}}{x_{i_1}x_{i_2}\cdots x_{i_m}}D_n(\mathbf{x},\mathbf{a},q),$$
it is convenient for us to define:
\begin{multline*}
\shoveright{I_{0}:=\{i_1,i_2,\ldots,i_{m}\}\ \mbox{is a set with
$0=i_1<i_2<\cdots <i_m<n$,}\\}
\shoveright{I:=I_0\setminus\{i_1\}=\{i_2,\ldots,i_m\},\\}
\shoveright{T:=\{t_1,\ldots,t_d\}\ \mbox{ is a $d$-element subset of
$I_0$ or $I$ with $t_1<t_2<\cdots<t_d$,}\\}
\shoveright{\sigma(T):=a_{t_1}+a_{t_2}+\cdots+a_{t_d},\\}
\shoveright{w_{i}:=\left\{ \begin{aligned}
         &a_i,\quad &for& \quad i\not\in T; \\
                  &0, \quad &for& \quad i\in T,
                          \end{aligned} \right.\\}
\shoveright{w:=w_1+w_2+\cdots+w_n=a-\sigma(T).}
\end{multline*}

\subsection{Main results}
In 1962, Freeman Dyson \cite{dyson1962} conjectured the following
identity:
\begin{theorem}[Dyson's Conjecture]\label{t-dyson}
For nonnegative integers $a_0,a_1,\ldots ,a_n$,
\begin{equation*}
\CT_{\mathbf{x}} \prod_{0\le i\ne j \le n}
\left(1-\frac{x_i}{x_j}\right)^{\!\!a_i} =
 \frac{(a_0+a_1+\cdots+a_n)!}{a_0!\, a_1!\, \cdots a_n!}.
\end{equation*}
\end{theorem}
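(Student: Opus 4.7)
The plan is to prove this classical identity via Good's recurrence argument. Writing $f(\mathbf{x}, \mathbf{a}) := \prod_{0 \leq i \neq j \leq n} (1 - x_i/x_j)^{a_i}$ and $F(\mathbf{a}) := \CT_{\mathbf{x}} f(\mathbf{x}, \mathbf{a})$, my target is to establish for $F$ the recurrence $F(\mathbf{a}) = \sum_{k=0}^{n} F(\mathbf{a} - \mathbf{e}_k)$ that the multinomial coefficient $M(\mathbf{a}) := (a_0+\cdots+a_n)!/(a_0!\cdots a_n!)$ satisfies (with the convention that $M$ vanishes whenever a coordinate is negative). Combined with the trivial base case $F(\mathbf{0}) = 1 = M(\mathbf{0})$, induction on $|\mathbf{a}| = a_0+\cdots+a_n$ then forces $F \equiv M$.

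The main algebraic input is the partition-of-unity identity
\[
\sum_{k=0}^{n}\, \prod_{\substack{0 \leq j \leq n \\ j \neq k}} \frac{1}{1 - x_k/x_j} = 1,
\]
which is nothing but the Lagrange interpolation of the constant function $1$ at the nodes $x_0, \ldots, x_n$ evaluated at $x = 0$. Multiplying $f$ by this identity, I note that each factor $\prod_{j \neq k}(1 - x_k/x_j)^{-1}$ pairs exactly with the sub-product $\prod_{j \neq k}(1 - x_k/x_j)^{a_k}$ of $f$ (the factors indexed by $(i,j) = (k,j)$), lowering its exponent by one, while all other factors of $f$ remain untouched. This yields the functional recurrence
\[
f(\mathbf{x}, \mathbf{a}) = \sum_{k=0}^{n} f(\mathbf{x}, \mathbf{a} - \mathbf{e}_k).
\]
Applying $\CT_{\mathbf{x}}$ term-by-term then produces the desired recurrence for $F$ and closes the induction.

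The main obstacle is the interpretation of this recurrence when some $a_k = 0$: the $k$-th summand on the right then formally carries a factor with negative exponent and is no longer a Laurent polynomial in $\mathbf{x}$. I would resolve this by fixing a reference expansion—expanding each $(1 - x_k/x_j)^{-1}$ as a geometric series in a prescribed direction—then verifying that $\CT_{\mathbf{x}}$ is well-defined and additive on the resulting space of formal Laurent series, and checking that the contribution of such a summand is zero, consistent with the convention $F(\mathbf{a}) = 0$ whenever a coordinate is negative. This sort of careful constant-term bookkeeping in the presence of partial-fraction expansions is precisely what Gessel and Xin systematized in their Laurent-series treatment of the $q$-Dyson theorem, and whose refinement powers the main results of the present paper.
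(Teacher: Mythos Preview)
Your argument follows Good's recursion, which is exactly the ``elegant recursive proof'' the paper cites (but does not reproduce) for Theorem~\ref{t-dyson}; the paper treats Dyson's conjecture as background and proves instead the $q$-analog and its generalization by an entirely different method---polynomial interpolation in $q^{a_0}$ via Lemma~\ref{lem1} and the two Main Lemmas. So your route is the classical one, not the paper's.

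That said, there is a real gap in your boundary treatment. Good's proof does \emph{not} try to show that $\CT_{\mathbf x}f(\mathbf x,\mathbf a-\mathbf e_k)=0$ when $a_k=0$. Instead it uses a different boundary: if some $a_k=0$, then $f(\mathbf x,\mathbf a)$ is still a Laurent polynomial, and its constant term in $x_k$ alone is precisely the $n$-variable Dyson product in the remaining $x$'s, so $F(a_0,\dots,0,\dots,a_n)$ reduces to the $(n{-}1)$-variable case. This, together with the recurrence for strictly positive $\mathbf a$ and a \emph{double} induction on $n$ and $|\mathbf a|$, determines $F$ uniquely and matches the multinomial. Your single induction on $|\mathbf a|$ with only the base value $F(\mathbf 0)=1$ requires the recurrence to hold for \emph{every} nonzero $\mathbf a$ with nonnegative entries, hence requires the vanishing of the bad summands---which you only assert. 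Once $a_k-1=-1$ the summand is a genuine rational function whose constant term depends a priori on the chosen variable ordering, and the vanishing does not drop out for free: for instance, taking $\CT_{x_0}$ by partial fractions when $k=0$ and $a_0=0$ produces residues at $x_0=x_j$ that do not individually vanish when $a_j=0$, so further argument is needed. Either carry out that verification in full, or revert to Good's dimension-reduction boundary, which is a two-line computation and closes the induction cleanly.
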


Dyson's conjecture was first proved independently by
Gunson~\cite{gunson} and  by Wilson \cite{wilson}. An elegant
recursive proof was published by Good \cite{good1}.

George Andrews \cite{andrews1975} conjectured the $q$-analog of the
Dyson conjecture in 1975:
\begin{thm}\label{thm-dyson}\emph{(Zeilberger-Bressoud)}.
For nonnegative integers $a_0,a_1,\dots,a_n$,
\begin{align*}
\CT_{\mathbf{x}}\,D_n(\mathbf{x},\mathbf{a},q)=\frac{(q)_{a+a_0}}{(q)_{a_0}(q)_{a_1}\cdots(q)_{a_n}}.
\end{align*}
\end{thm}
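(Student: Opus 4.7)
My plan is to follow the Laurent-series and partial-fraction method of Gessel and Xin, proceeding by induction on $n$. The base case $n=0$ is trivial since $D_0=1$ and the right-hand side reduces to $1$. For the inductive step, the strategy is to isolate the dependence on a distinguished variable (say $x_0$) and reduce the constant term of $D_n$ (in $n+1$ variables) to the constant term of $D_{n-1}$ (in $n$ variables) via residue analysis.

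First I would factor out the $x_0$-dependence, writing
\[
D_n(\mathbf{x},\mathbf{a},q) \;=\; \Phi(x_0)\cdot \tilde D(x_1,\dots,x_n),
\]
where $\Phi(x_0) := \prod_{j=1}^{n} (x_0/x_j)_{a_0}\,((x_j/x_0)q)_{a_j}$ collects all factors involving $x_0$ and $\tilde D$ is the $q$-Dyson product in the remaining variables $x_1,\dots,x_n$. As a function of $x_0$, $\Phi$ is a Laurent polynomial of $x_0$-degree $na_0$ and $x_0^{-1}$-degree $a$. After multiplying by a suitable monomial to obtain a genuine rational function in $x_0$, the key manipulation is a partial-fraction decomposition whose simple poles sit at $x_0=0$ and at $x_0=x_jq^{-k}$ for $1\le j\le n$ and $0\le k\le a_j-1$. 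The contribution of each simple pole to the $x_0$-constant term can be evaluated by a residue calculation: the residue at $x_0=x_jq^{-k}$ produces an explicit $q$-binomial coefficient times a shifted evaluation of $\tilde D$.

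Summing these residue contributions and applying the inductive hypothesis to each of the resulting $n$-variable constant terms of $\tilde D$ should yield the desired right-hand side, provided that the auxiliary sum of $q$-binomial coefficients can be collapsed in closed form --- a task which I would reduce to a $q$-Chu-Vandermonde identity. The main obstacle will be the partial-fraction bookkeeping: one must prove that the apparent poles for which the residue vanishes actually do vanish after substitution (the Gessel-Xin key cancellation lemma), and then recognize that the surviving residues combine into a single closed-form $q$-Pochhammer expression. This combinatorial collapse is the technical heart of the argument, and it is precisely the step that the rest of the paper aims to generalize in order to treat the more intricate numerators $x_{j_1}^{p_1}\cdots x_{j_\nu}^{p_\nu}/(x_{i_1}\cdots x_{i_m})$ appearing in the main results.
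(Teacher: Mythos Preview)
Your proposal has a genuine gap at the partial-fraction step. For nonnegative $a_0,\dots,a_n$, the factor $\Phi(x_0)=\prod_{j=1}^{n}(x_0/x_j)_{a_0}(x_jq/x_0)_{a_j}$ is a Laurent \emph{polynomial} in $x_0$; the points $x_0=x_jq^{-k}$ are zeros (of $(x_0/x_j)_{a_0}$, for $0\le k\le a_0-1$), not poles, and the factors $(x_jq/x_0)_{a_j}$ contribute only a pole at $x_0=0$. So there is no partial-fraction decomposition of the kind you describe, and no residues to sum. Even if one forces poles to appear (by taking $a_0$ negative, which is what the paper in fact does), the substitution $x_0=x_{r_1}q^{-k_1}$ does \emph{not} produce ``a $q$-binomial coefficient times a shifted evaluation of $\tilde D$'': the remaining factors $(x_0/x_j)_{-h}(x_jq/x_0)_{a_j}$ for $j\ne r_1$ become rational functions mixing $x_{r_1}$ with the other variables, so after a single residue one does not land back in a smaller $q$-Dyson product and the inductive hypothesis cannot be invoked. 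This is precisely why Good's one-step recursion for the $q=1$ Dyson conjecture has no direct $q$-analogue, and why the $q$-Dyson conjecture stood open for a decade.

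The missing idea, supplied by Gessel--Xin and used throughout this paper, is to first prove that $\CT_{\mathbf x}D_n$ is a polynomial in $q^{a_0}$ of degree at most $a$ (Lemma~\ref{lem1}), so that it suffices to check $a+1$ values of $a_0$. At the \emph{negative} values $a_0=-h$ with $1\le h\le a$, the factors $(x_0/x_j)_{a_0}$ genuinely become denominators, and one then iterates partial fractions successively in $x_0,x_{r_1},x_{r_2},\dots$ (not a single step in $x_0$), using the combinatorial tournament Lemma~\ref{lem-import} to show that every surviving term $Q(h\mid\mathbf r;\mathbf k)$ contains a vanishing factor. The one remaining value of $a_0$ is handled either by induction on $n$ (Gessel--Xin) or, as in this paper, by an explicit evaluation at $a_0=-a-1$; no $q$-Chu--Vandermonde collapse enters.
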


Andrews' $q$-Dyson conjecture attracted much interest
\cite{askey1980, kadell1985, stanley-qdyson1, stanley-qdyson2,
stembridge-qdyson}, and was first proved, combinatorially, by
Zeilberger and Bressoud \cite{zeil-bres1985} in 1985. Recently,
Gessel and Xin \cite{gess-xin2006} gave a very different proof by
using properties of formal Laurent series and of polynomials. The
coefficients of the Dyson and $q$-Dyson product are researched in
\cite{breg, kadell1998,  sills2006, sills-zeilberger,
stembridge1987}. In the equal parameter case, the identity reduces
to Macdonald's constant term conjecture \cite{macdonald} for root
systems of type $A$.

The main results of this paper are the following $q$-Dyson style
constant term identities:
\begin{thm}[Main Theorem]\label{t-main-thm}
Let $i_1,\ldots,i_m$ and $j_1,\ldots,j_{\nu}$ be distinct integers
satisfying $0=i_1<i_2<\cdots <i_m<n$ and $0<j_1<\cdots <j_{\nu}\le n$.
Then
{\small\begin{align}\label{e-main}
\CT_{\mathbf{x}}\frac{x_{j_1}^{p_1}\cdots
x_{j_{\nu}}^{p_{\nu}}}{x_{i_1}x_{i_2}\cdots
x_{i_m}}D_n(\mathbf{x},\mathbf{a},q)
=\frac{(q)_{a+a_0}}{(q)_{a_0}(q)_{a_1}\cdots(q)_{a_n}}\sum_{
\varnothing\neq T\subseteq I_0}(-1)^dq^{L(T)}
\frac{1-q^{\sigma(T)}}{1-q^{1+a_0+a-\sigma(T)}},
\end{align}}
where the $p$'s are positive integers with $\sum_{i=1}^{\nu}p_i=m$ and
\begin{align}\label{e-LT}
L(T)=\sum_{l\in I_0}\sum_{i=l}^nw_i-\sum_{l=1}^{\nu}p_l\sum_{i=j_l}^nw_i.
\end{align}
\end{thm}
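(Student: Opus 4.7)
The plan is to extend the formal Laurent series method of Gessel and Xin. The starting point is to view the expression $\frac{x_{j_1}^{p_1}\cdots x_{j_\nu}^{p_\nu}}{x_{i_1}x_{i_2}\cdots x_{i_m}} D_n(\mathbf{x}, \mathbf{a}, q)$ inside a suitable ring of iterated formal Laurent series in $x_1, \ldots, x_n$. Because the numerator and denominator monomials have the same total degree $m$, the overall expression is homogeneous of degree zero, so its constant term is well-posed and can be computed by a residue-type argument one variable at a time.

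The key step is an iterated partial-fractions decomposition with respect to each denominator variable $x_{i_k}$, adapted to the pole structure of $D_n(\mathbf{x},\mathbf{a},q)$. In $D_n$, the factors $(x_{i_k}/x_j)_{a_{i_k}}$ and $(x_j/x_{i_k}\cdot q)_{a_j}$ for $j \ne i_k$ dictate a natural partial-fraction basis in $x_{i_k}$. Expanding $1/x_{i_k}$ in this basis produces a sum of residue-like contributions, each absorbing $x_{i_k}$ into some other index. Iterating over $k = 1, \ldots, m$ yields a sum of terms indexed by subsets $T \subseteq I_0$, where $T$ records the set of denominator indices whose partial-fraction choice is nontrivial. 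The sign $(-1)^d$ arises directly from this decomposition, while the exponent $L(T)$ in \eqref{e-LT} arises from tracking the $q$-shifts of the selected residues; its precise form reflects the position of each absorbed index relative to the $j_l$'s.

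For each subset $T$, the remaining constant-term problem is an evaluation of a $q$-Dyson product in which the parameters $a_t$ with $t \in T$ have effectively been set to zero (so those variables drop out). Applying the Zeilberger--Bressoud Theorem~\ref{thm-dyson} to this reduced product produces the prefactor $\frac{(q)_{a+a_0}}{(q)_{a_0}\cdots(q)_{a_n}}$ together with the rational correction $\frac{1-q^{\sigma(T)}}{1-q^{1+a_0+a-\sigma(T)}}$, which encodes the ratio of the reduced $q$-Dyson constant to the original, multiplied by a normalization factor from the residue at $x_{i_1} = x_0$.

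The main obstacle will be the rigorous justification of each partial-fractions step inside the multivariable ring of formal Laurent series, combined with the delicate bookkeeping needed to collect the many individual $q$-shifts from the residues into the single clean exponent $L(T)$ of \eqref{e-LT}. The dependence of $L(T)$ on both the indices of $I_0$ and the exponents $p_l$ attached to the $j_l$'s reflects asymmetric contributions of absorbed versus free indices, and matching this dependence exactly — especially for arbitrary $p_l$ rather than only $p_l=1$ — is where the combinatorial computation will be most intricate, likely requiring a preliminary reduction to a fundamental case with all $p_l=1$ followed by a polynomial-identity argument to recover the general formula.
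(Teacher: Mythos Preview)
Your proposal has a structural gap at the very first step. You plan an iterated partial-fractions decomposition in the denominator variables $x_{i_k}$, with the factors $(x_{i_k}/x_j)_{a_{i_k}}$ and $(x_j q/x_{i_k})_{a_j}$ ``dictating a natural partial-fraction basis.'' But $D_n(\mathbf{x},\mathbf{a},q)$ is a \emph{Laurent polynomial} in each $x_{i_k}$: the factors $(x_j q/x_{i_k})_{a_j}$ and $(x_i/x_{i_k})_{a_i}$ contribute only a pole at $x_{i_k}=0$, while the remaining factors are polynomial in $x_{i_k}$. There are no nonzero poles in $x_{i_k}$, hence no residue sum to generate the subsets $T\subseteq I_0$, the signs $(-1)^d$, or the exponents $q^{L(T)}$ that you describe. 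Your account of the factor $\dfrac{1-q^{\sigma(T)}}{1-q^{1+a_0+a-\sigma(T)}}$ as a ratio of a reduced $q$-Dyson constant to the full one is also not right: that ratio is $\dfrac{(q)_{a_0+a-\sigma(T)}}{(q)_{a_0+a}}\prod_{t\in T}(q)_{a_t}$, a product of many $q$-linear factors rather than a single one.

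The paper supplies precisely the missing mechanism. It does not decompose $D_n$ directly; instead it first proves (Lemma~\ref{lem1}) that both sides of \eqref{e-main} are polynomials in $q^{a_0}$ of degree at most $a+1$, and then verifies equality at the $a+2$ points $a_0=0,-1,\dots,-a-1$. Only after specializing $a_0=-h$ with $h>0$ does the product become $Q(h)$ of \eqref{qh1}, which \emph{does} acquire genuine simple poles $x_0=x_j q^{k}$ for $1\le k\le h$; it is these poles that make the Gessel--Xin partial-fraction machinery (Lemmas~\ref{lem-almostprop}, \ref{lem-import}, \ref{lem-lead1}) applicable. The sum over $T$ in \eqref{e-main} then arises not from a decomposition of the constant term itself, but from classifying (Lemma~\ref{lem-lead2} and Main Lemma~\ref{lem-main2}) the exceptional values $a_0=-(a-\sigma(T)+1)$ at which the left side fails to vanish, with Lemma~\ref{main2-1} matching those values to the right side. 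Your outline would have to be reorganized around this polynomial-interpolation-in-$q^{a_0}$ idea before any of the residue bookkeeping you describe becomes meaningful.
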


We remark that the cases $i_1>0$ or $i_m=n$ or both can be evaluated
using the above theorem and Lemma {\rm\ref{l-pi}}. The equal
parameter case of the above results are called by Stembridge
\cite{stembridge1987} ``the first layer formulas for characters of
$SL(n,\mathbb{C})$". The following three Corollaries are the
simplified, but equivalent, version of Sills' conjectures
{\rm\cite{sills2006}}. They are all special cases of Theorem
\ref{t-main-thm}. When $m=1$, we obtain
\begin{cor}[Conjecture 1.2, \cite{sills2006}]\label{conj-s1}
Let $r$ be a fixed integer with $0< r \leq n$ and
$n\geq 1$. Then
\begin{align}\label{e-conj1}
\CT_{\mathbf{x}}\,\frac{x_r}{x_0}\,D_n(\mathbf{x},\mathbf{a},q)=-q^{\sum_{k=1}^{r-1}a_k}
\left(\frac{1-q^{a_0}}{1-q^{a+1}}\right)\frac{(q)_{a+a_0}}{(q)_{a_0}(q)_{a_1}\cdots(q)_{a_n}}.
\end{align}
\end{cor}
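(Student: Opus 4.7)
The plan is to obtain \eqref{e-conj1} as a one-line specialization of the Main Theorem (Theorem~\ref{t-main-thm}). I would set $m = 1$, $\nu = 1$, $j_1 = r$, and $p_1 = 1$, so that the constraint $\sum p_i = m$ is satisfied. Since the hypothesis $0 = i_1 < i_2 < \cdots < i_m < n$ with $m = 1$ forces $I_0 = \{0\}$ and $I = \varnothing$, the sum $\sum_{\varnothing \neq T \subseteq I_0}$ on the right-hand side of \eqref{e-main} collapses to the single term $T = \{0\}$, with $d = 1$ and $\sigma(T) = a_0$.

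The only thing left to verify is the shape of the exponent $L(T)$. For $T = \{0\}$ the weights of Section~1.1 are $w_0 = 0$ and $w_i = a_i$ for $i = 1, \ldots, n$. Substituting into \eqref{e-LT},
\begin{align*}
L(\{0\}) = \sum_{l \in I_0} \sum_{i=l}^{n} w_i - \sum_{l=1}^{\nu} p_l \sum_{i=j_l}^{n} w_i = \sum_{i=0}^{n} w_i - \sum_{i=r}^{n} w_i = \sum_{k=1}^{r-1} a_k,
\end{align*}
which is precisely the exponent appearing in \eqref{e-conj1}. The quotient $(1 - q^{\sigma(T)})/(1 - q^{1 + a_0 + a - \sigma(T)})$ simplifies to $(1 - q^{a_0})/(1 - q^{a+1})$, and the overall factor $(-1)^d = -1$ supplies the minus sign.

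Thus the corollary follows from Theorem~\ref{t-main-thm} by direct substitution and a routine bookkeeping of the weights $w_i$; there is no genuine obstacle specific to this corollary, and in particular no new analytic or combinatorial input is required. All of the substance is already carried by Theorem~\ref{t-main-thm}, whose proof (via the Laurent series and partial fraction techniques of Gessel--Xin) handles the general index set $I_0$ and the general vector of exponents $(p_1, \ldots, p_\nu)$.
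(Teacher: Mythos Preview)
Your proposal is correct and follows essentially the same route as the paper's own proof: specialize Theorem~\ref{t-main-thm} with $I_0=\{0\}$, compute $L(\{0\})=\sum_{k=1}^{r-1}a_k$, and read off the single summand $T=\{0\}$. No differences worth noting.
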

When $m=2$ and $p_1=2$, we obtain
\begin{cor}[Conjecture 1.5, \cite{sills2006}]\label{conj-s2}
Let $r,t$ be fixed integers with $1\leq t< r \leq n$ and $n\geq 2$. Then
{\small\begin{align}\label{e-conj2}
\CT_{\mathbf{x}}&\,\frac{x_r^2}{x_0x_t}\,D_n(\mathbf{x},\mathbf{a},q)\nonumber\\
&{\tiny=q^{\widetilde{L}(r,t)}\left(\frac{(1-q^{a_0})(1-q^{a_t})\Big((1-q^{a_0+a+1})+q^{a_t}
(1-q^{a+1-a_t})\Big)}
{(1-q^{a+1-a_t})(1-q^{a+1})
(1-q^{a_0+a+1-a_t})}\right)\frac{(q)_{a+a_0}}{(q)_{a_0}(q)_{a_1}\cdots(q)_{a_n}},}
\end{align}}
where $\widetilde{L}(r,t)=2\sum_{k=t+1}^{r-1}a_k+\sum_{k=1}^{t-1}a_k$.
\end{cor}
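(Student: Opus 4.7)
The plan is to specialize the Main Theorem (Theorem \ref{t-main-thm}) to the parameters $m=2$, $\nu=1$, $I_0 = \{i_1,i_2\} = \{0,t\}$, $j_1 = r$, $p_1 = 2$, and then simplify the resulting three-term sum into Sills' closed form. Under this specialization the formula (\ref{e-LT}) reduces to
\begin{align*}
L(T) = \sum_{i=0}^{n} w_i + \sum_{i=t}^{n} w_i - 2\sum_{i=r}^{n} w_i,
\end{align*}
with $w_i = a_i$ for $i \notin T$ and $w_i = 0$ for $i \in T$. The three nonempty subsets of $I_0$ are $\{0\}$, $\{t\}$, and $\{0,t\}$.

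First I would compute $\sigma(T)$ and $L(T)$ for each subset. Using $a = \sum_{i=1}^n a_i$ and splitting each telescoping sum at the indices $t$ and $r$, a direct calculation gives
\begin{align*}
\sigma(\{0\}) &= a_0, & L(\{0\}) &= \widetilde{L}(r,t) + 2a_t,\\
\sigma(\{t\}) &= a_t, & L(\{t\}) &= \widetilde{L}(r,t) + a_0,\\
\sigma(\{0,t\}) &= a_0+a_t, & L(\{0,t\}) &= \widetilde{L}(r,t),
\end{align*}
where $\widetilde{L}(r,t) = \sum_{k=1}^{t-1}a_k + 2\sum_{k=t+1}^{r-1}a_k$. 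Plugging these into (\ref{e-main}) and pulling the common factor $q^{\widetilde{L}(r,t)}$ out of the sum, the constant term becomes $q^{\widetilde{L}(r,t)}\cdot (q)_{a+a_0}/\bigl[(q)_{a_0}(q)_{a_1}\cdots(q)_{a_n}\bigr]$ times
\begin{align*}
S \;:=\; \frac{1-q^{a_0+a_t}}{1-q^{a+1-a_t}} \;-\; \frac{q^{a_0}(1-q^{a_t})}{1-q^{a_0+a+1-a_t}} \;-\; \frac{q^{2a_t}(1-q^{a_0})}{1-q^{a+1}}.
\end{align*}

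The main obstacle, and where all of the actual labour sits, is then verifying the scalar identity that $S$ coincides with the big fraction on the right-hand side of (\ref{e-conj2}) (with the factorial prefactor stripped). I would carry this out by clearing to the common denominator $(1-q^{a+1-a_t})(1-q^{a+1})(1-q^{a_0+a+1-a_t})$, substituting $A = q^{a_0}$, $B = q^{a_t}$, $Q = q^{a+1}$, and expanding the three resulting cross products. After grouping terms by monomials in $A, B, Q$, all the $Q^2$-contributions cancel, the $\pm QA/B$, $\pm AB$, and $\pm QAB$ terms telescope pairwise, and the surviving terms collapse to
\begin{align*}
(1-A)(1-B)\bigl[(1+B) - Q(1+A)\bigr] \;=\; (1-q^{a_0})(1-q^{a_t})\bigl[(1-q^{a_0+a+1}) + q^{a_t}(1-q^{a+1-a_t})\bigr],
\end{align*}
which is exactly the numerator in (\ref{e-conj2}). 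This finishes the identification and proves the corollary. Thus the proof is really an elementary specialization plus one bounded partial-fractions computation; the only risk is a sign or bookkeeping slip in expanding $L(T)$, so I would double-check those three values carefully before tackling the algebra.
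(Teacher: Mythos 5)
Your proposal is correct and follows essentially the same route as the paper: specialize Theorem \ref{t-main-thm} to $I_0=\{0,t\}$, $j_1=r$, $p_1=2$, compute $L(\{0\})=\widetilde{L}(r,t)+2a_t$, $L(\{t\})=\widetilde{L}(r,t)+a_0$, $L(\{0,t\})=\widetilde{L}(r,t)$ (these agree with the paper's values), and simplify the resulting three-term sum to Sills' closed form. Your values of $\sigma(T)$, the signs $(-1)^d$, and the final rational-function identity all check out (e.g.\ numerically), so the only difference is that you spell out the "simplifying" step the paper leaves implicit.
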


When $m=2$ and $p_1=p_2=1$, we obtain
\begin{cor}[Conjecture 1.7, \cite{sills2006}]\label{conj-s3}
Let $r,s,t$ be fixed integers with $1\leq r< s \leq n, t<s$ and $n\geq 3$. Then
{\small\begin{align}\label{e-conj3}
&\CT_{\mathbf{x}}\,\frac{x_rx_s}{x_0x_t}\,D_n(\mathbf{x},\mathbf{a},q)\nonumber\\
&{\tiny=q^{\widetilde{L}(r,s,t)}\left(\frac{(1-q^{a_0})(1-q^{a_t})\Big((1-q^{a_0+a+1})+q^{M(r,s,t)}
(1-q^{a+1-a_t})\Big)}
{(1-q^{a+1-a_t})(1-q^{a+1})
(1-q^{a_0+a+1-a_t})}\right)\frac{(q)_{a+a_0}}{(q)_{a_0}(q)_{a_1}\cdots(q)_{a_n}},}
\end{align}}
where
\begin{equation*}
\widetilde{L}(r,s,t)=
\left\{%
\begin{array}{ll}
    \sum_{k=1}^{r-1}a_k+\sum_{k=t+1}^{s-1}a_k, &\ \hbox{if}\  \quad r<t<s;\\
    \sum_{k=r}^{s-1}a_k+\sum_{k=1}^{t-1}a_k+2\sum_{k=t+1}^{r-1}a_k, &\ \hbox{if}\ \quad  t<r<s, \\
\end{array}%
\right.
\end{equation*}
and
\begin{equation*}
M(r,s,t)=
\left\{%
\begin{array}{ll}
    1+a+a_0, &\ \hbox{if}\  \quad r<t<s;\\
    a_t, &\ \hbox{if}\ \quad  t<r<s. \\
\end{array}%
\right.
\end{equation*}
\end{cor}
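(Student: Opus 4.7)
The approach is to derive Corollary~\ref{conj-s3} as a specialization of Theorem~\ref{t-main-thm}, followed by an algebraic simplification. Taking $m=2$ with $i_1=0$, $i_2=t$, and $\nu=2$ with $p_1=p_2=1$, $j_1=r$, $j_2=s$, we have $I_0=\{0,t\}$, so the sum on the right of \eqref{e-main} contains exactly three terms, indexed by $T\in\{\{0\},\{t\},\{0,t\}\}$. The corresponding values $\sigma(T)\in\{a_0,\,a_t,\,a_0+a_t\}$ are immediate and produce the three denominators $1-q^{a+1}$, $1-q^{a_0+a+1-a_t}$, and $1-q^{a+1-a_t}$ that appear in the statement.

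The first substantive step is to evaluate $L(T)$ from \eqref{e-LT} for each of the three choices of $T$. Since $w_i$ equals $a_i$ or $0$ according as $i\notin T$ or $i\in T$, each tail $\sum_{i=l}^{n} w_i$ equals $\sum_{k=l}^{n} a_k$ minus the contribution of those elements of $T$ lying in $[l,n]$. The four values of $l$ appearing in \eqref{e-LT} are $0,\,t,\,r,\,s$, so the way these tails telescope depends on the relative order of $r$ and $t$. I would therefore separate the two cases $r<t<s$ and $t<r<s$. A direct computation gives, in both cases, $L(\{0,t\})=\widetilde L(r,s,t)$ and $L(\{t\})=\widetilde L(r,s,t)+a_0$, while $L(\{0\})=\widetilde L(r,s,t)+a_t$ in the first case and $L(\{0\})=\widetilde L(r,s,t)+2a_t$ in the second. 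This difference is precisely what drives the case split in the definition of $M(r,s,t)$.

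After factoring $q^{\widetilde L(r,s,t)}$ and the prefactor $(q)_{a+a_0}/((q)_{a_0}\cdots(q)_{a_n})$ out of the right-hand side of \eqref{e-main}, the claim reduces to a rational function identity in $q$ with parameters $a_0,\,a_t,\,a$. Clearing the common denominator $(1-q^{a+1})(1-q^{a+1-a_t})(1-q^{a_0+a+1-a_t})$, the goal is to verify that the resulting polynomial factors as $(1-q^{a_0})(1-q^{a_t})\bigl[(1-q^{a_0+a+1})+q^{M(r,s,t)}(1-q^{a+1-a_t})\bigr]$, with $M=1+a+a_0$ when $r<t<s$ and $M=a_t$ when $t<r<s$. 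The main obstacle is this final simplification: each of the three terms individually contains at most one of the factors $1-q^{a_0}$ and $1-q^{a_t}$, so extracting both simultaneously requires a careful grouping in which the exponent of $q^{a_t}$ carried by the $T=\{0\}$ contribution is exactly what determines $M$ in each case. The calculation is elementary but book-keeping-heavy, and once carried out it yields the closed form conjectured by Sills.
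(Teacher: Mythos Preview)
Your proposal is correct and follows essentially the same route as the paper's own proof: specialize Theorem~\ref{t-main-thm} with $I_0=\{0,t\}$ and $p_1=p_2=1$, compute $L(T)$ for each of $T=\{0\},\{t\},\{0,t\}$, and simplify. Your additional observation that $L(\{0,t\})=\widetilde L(r,s,t)$, $L(\{t\})=\widetilde L(r,s,t)+a_0$, and $L(\{0\})=\widetilde L(r,s,t)+\epsilon\, a_t$ with $\epsilon\in\{1,2\}$ depending on the order of $r$ and $t$ is exactly what drives the case split in $M(r,s,t)$, and makes the final factoring more transparent than the paper's terse ``substituting and simplifying.''
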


When letting $q$ approach $1$ from the left, we get
\begin{thm}
Let $i_1,\ldots,i_m$ and $j_1,\ldots,j_{\nu}$ be distinct integers
with  $0= i_1<\cdots<i_m< n$ and $0< j_1<\cdots<j_{\nu}\leq n$. Then
\begin{equation*}{\small
\CT_{\mathbf{x}}\frac{x_{j_1}^{p_1}\cdots
x_{j_{\nu}}^{p_{\nu}}}{x_{i_1}x_{i_2}\cdots x_{i_{m}}} \prod_{0\le i\ne j
\le n} \left(1-\frac{x_i}{x_j}\right)^{\!\!a_i}
=\frac{(a_0+a_1+\cdots+a_n)!}{a_0!\, a_1!\, \cdots
a_n!}\sum_{\varnothing\neq T\subseteq
I_0}(-1)^{d}\frac{\sigma(T)}{1+a+a_0-\sigma(T)},}
\end{equation*}
where the $p$'s are positive integers with $\sum_{i=1}^{\nu}p_i=m$.
\end{thm}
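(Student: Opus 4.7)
The plan is to obtain this identity as the $q\to 1^-$ specialization of the Main Theorem~\ref{t-main-thm}. Since each factor $(x_i/x_j)_{a_i}$ and $(x_jq/x_i)_{a_j}$ in $D_n(\mathbf{x},\mathbf{a},q)$ is a polynomial in $q$ and in the variables, the expression $x_{j_1}^{p_1}\cdots x_{j_\nu}^{p_\nu}(x_{i_1}\cdots x_{i_m})^{-1}D_n(\mathbf{x},\mathbf{a},q)$ is a Laurent polynomial in $\mathbf{x}$ whose coefficients are polynomials in $q$. Consequently the left-hand side of~\eqref{e-main} is a polynomial in $q$, and its limit as $q\to 1^-$ can be computed termwise, commuting freely with $\CT_{\mathbf{x}}$.

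First I would handle the left-hand side. Using the termwise limit $(z)_k=\prod_{j=0}^{k-1}(1-zq^j)\to(1-z)^k$ as $q\to 1$, the $q$-Dyson product becomes
$$\lim_{q\to 1^-}D_n(\mathbf{x},\mathbf{a},q)=\prod_{0\le i<j\le n}\Bigl(1-\frac{x_i}{x_j}\Bigr)^{\!a_i}\Bigl(1-\frac{x_j}{x_i}\Bigr)^{\!a_j}=\prod_{0\le i\ne j\le n}\Bigl(1-\frac{x_i}{x_j}\Bigr)^{\!a_i},$$
which is exactly the ordinary Dyson product appearing in the statement. Next I would handle the right-hand side of~\eqref{e-main}. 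The standard limit $(q)_k/(1-q)^k\to k!$, together with the fact that the total exponent of $(1-q)$ in the numerator and denominator of $(q)_{a+a_0}/\bigl((q)_{a_0}(q)_{a_1}\cdots(q)_{a_n}\bigr)$ both equal $a+a_0$, shows that this prefactor tends to $(a+a_0)!/(a_0!\,a_1!\cdots a_n!)$. Within the sum, $q^{L(T)}\to 1$ since $L(T)$ is a fixed nonnegative integer, and the elementary relation $(1-q^m)/(1-q^\ell)\to m/\ell$ for positive integers $m,\ell$ yields $(1-q^{\sigma(T)})/(1-q^{1+a+a_0-\sigma(T)})\to \sigma(T)/(1+a+a_0-\sigma(T))$. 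Assembling the pieces produces the target identity.

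The only delicate point — and the step I would check carefully — is that the denominator exponent $1+a+a_0-\sigma(T)$ remains strictly positive for every nonempty $T\subseteq I_0$, so that each summand has a finite limit. This follows from $\sigma(T)\le\sigma(I_0)=a_0+a_{i_2}+\cdots+a_{i_m}\le a_0+a$ (the last inequality is in fact strict when $a_n>0$ because $i_m<n$ forces $a_n$ to be absent from the sum), whence $1+a+a_0-\sigma(T)\ge 1$. Thus no indeterminate forms obstruct the termwise passage to the limit, and the computation legitimately recovers the desired formula.
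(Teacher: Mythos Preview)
Your proposal is correct and follows exactly the route the paper indicates: the paper introduces this theorem with the single phrase ``When letting $q$ approach $1$ from the left, we get'' and gives no further argument, so you have simply filled in the routine details of that limit. One harmless overclaim: you assert $L(T)$ is a \emph{nonnegative} integer, which is not obvious from \eqref{e-LT} and is in any case unnecessary, since $q^{L(T)}\to 1$ for any integer exponent $L(T)$.
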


The proof of Theorem \ref{t-main-thm} is along the same line of
Gessel and Xin's proof of Theorem \ref{thm-dyson}
\cite{gess-xin2006}, but with a major improvement. First of all, the
underlying idea is the well-known fact that proving the equality of
two polynomials of degree at most $d$, it suffices to prove that
they are equal at $d+1$ points. As is often the case, points at
which the polynomials vanish are most easily dealt with.

It is routine to show that after fixing parameters $a_1,\dots, a_n$,
the constant term is a polynomial of degree at most $d$ in the
variable $q^{a_0}$. Then we can apply the Gessel-Xin's technique to
show that the equality holds when the polynomial vanishes. The proof then
differs in showing the equality at the extra points: The $q$-Dyson
conjecture needs one extra point, which can be shown by induction;
Corollaries \ref{conj-s1}, \ref{conj-s2}, and \ref{conj-s3} need
one, two and two extra points respectively; Theorem \ref{t-main-thm}
needs many extra points. To prove Theorem \ref{t-main-thm}, we
develop, based on Gessel and Xin's work, a new technique in
evaluating the constant terms at these extra points.

This paper is organized as follows. In section 2, our main result,
Theorem \ref{t-main-thm}, is established under the assumption of two
main lemmas. The first lemma is for the vanishing points and the
second one is for the extra points, and they take us the next three
sections to prove. Then by specializing our main theorem, we prove
Sills' three conjectures. In section 3, we introduce the field of
iterated Laurent series and partial fraction decompositions as basic
tools for evaluating constant terms. We also introduce basic notions
and lemmas of \cite{gess-xin2006} in a generalized form. These are
essential for proving the two main lemmas. In section 4, we deal
with some general $q$-Dyson style constant terms and prove our first
main lemma. Section 5 includes new techniques and complicated
computations for our second main lemma. It is a continuation of
section 4.

\section{The proofs and the consequences}
Dyson's conjecture, Andrews' $q$-Dyson conjecture, and their
relatives are all constant terms of certain Laurent polynomials.
However, larger rings and fields will encounter when evaluating
them. We closely follow the notation in \cite{gess-xin2006}. In
order to prove our Main Theorem, we make several generalizations
that need to go into details to explain.

We first work in the ring of Laurent polynomials to see that some
seemingly more complicated cases can be solved by Theorem
\ref{t-main-thm}.

Define an action $\pi$ on Laurent polynomials by
\begin{align*}
\pi \big(F(x_0,x_1,\ldots,x_n)\big)=F(x_1,x_2,\dots,x_n,x_0/q).
\end{align*}
By iterating, if $F(x_0,x_1,\dots,x_n)$ is homogeneous of degree
$0$, then
$$\pi^{n+1}\big(F(x_0,x_1,\ldots,x_n)\big)=F(x_0/q,x_1/q,\ldots,x_n/q)=F(x_0,x_1,\ldots,x_n),  $$
so that in particular $\pi$ is a cyclic action on
$D_n(\mathbf{x},\mathbf{a},q)$.

\begin{lem}\label{l-pi}
Let $L(\mathbf{x})$ be a Laurent polynomial
in the $x$'s. Then
\begin{align}\label{e-cyclic}
\CT_\mathbf{x} L(\mathbf{x})\,D_n(\mathbf{x},\mathbf{a},q)=
\CT_\mathbf{x} \, \pi
\big(L(\mathbf{x})\big)D_n\big(\mathbf{x},(a_n,a_0,\ldots,a_{n-1}),q\big).
\end{align}
By iterating \eqref{e-cyclic} and renaming the parameters,
evaluating $\CT_\mathbf{x}
L(\mathbf{x})\,D_n(\mathbf{x},\mathbf{a},q) $ is equivalent to
evaluating $\CT_\mathbf{x}
\pi^k(L(\mathbf{x}))\,D_n(\mathbf{x},\mathbf{a},q)$ for any integer
$k$.
\end{lem}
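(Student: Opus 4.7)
The plan is to treat $\pi$ as the substitution ring homomorphism on Laurent polynomials induced by $x_i\mapsto x_{i+1}$ for $i<n$ and $x_n\mapsto x_0/q$, and then verify \eqref{e-cyclic} via two observations: $\pi$ carries the $q$-Dyson product with parameters $\mathbf a$ to the one with parameters cyclically shifted, and $\pi$ preserves the constant-term functional on Laurent polynomials.

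For the first observation, I would split the product defining $D_n(\mathbf x,\mathbf a,q)$ according to whether $j=n$. The factors with $j<n$ shift cleanly under $\pi$: the index change $k=i+1$, $l=j+1$ turns $\prod_{0\le i<j<n}(x_i/x_j)_{a_i}(qx_j/x_i)_{a_j}$ into $\prod_{1\le k<l\le n}(x_k/x_l)_{a_{k-1}}(qx_l/x_k)_{a_{l-1}}$. The $j=n$ factors $(x_i/x_n)_{a_i}(qx_n/x_i)_{a_n}$ for $0\le i<n$ become, under $x_i\mapsto x_{i+1}$ and $x_n\mapsto x_0/q$, equal to $(qx_{i+1}/x_0)_{a_i}(x_0/x_{i+1})_{a_n}$; relabeling $k=i+1$ yields $\prod_{k=1}^n(x_0/x_k)_{a_n}(qx_k/x_0)_{a_{k-1}}$. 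Together the two blocks are precisely $D_n(\mathbf x,\mathbf b,q)$ with $b_0=a_n$ and $b_k=a_{k-1}$ for $k\ge 1$, so $\pi(D_n(\mathbf x,\mathbf a,q))=D_n(\mathbf x,(a_n,a_0,\ldots,a_{n-1}),q)$. Since $\pi$ is a ring homomorphism, multiplying by $\pi(L(\mathbf x))$ gives $\pi\bigl(L(\mathbf x)D_n(\mathbf x,\mathbf a,q)\bigr)=\pi(L(\mathbf x))\cdot D_n(\mathbf x,(a_n,a_0,\ldots,a_{n-1}),q)$.

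For the second observation, I note that on each monomial $\pi$ acts by $x_0^{k_0}\cdots x_n^{k_n}\mapsto q^{-k_n}x_0^{k_n}x_1^{k_0}\cdots x_n^{k_{n-1}}$. The image is a constant monomial iff $(k_n,k_0,\ldots,k_{n-1})=\mathbf 0$, iff the source already was; and in that case the accompanying scalar is $q^0=1$. Therefore $\CT_{\mathbf x}G=\CT_{\mathbf x}\pi(G)$ for every Laurent polynomial $G$, and applying this to $G=L(\mathbf x)D_n(\mathbf x,\mathbf a,q)$ yields \eqref{e-cyclic}. The final claim about iteration follows at once: $k$ successive applications of \eqref{e-cyclic} cycle $\mathbf a$ by $k$ positions, after which the shifted parameter tuple can be renamed back to $\mathbf a$ since the $a_i$ are arbitrary.

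No step is genuinely hard; the only point requiring care is the $j=n$ bookkeeping in the first observation, where one must check that the factor of $q$ introduced by $x_n\mapsto x_0/q$ correctly converts $(x_i/x_n)_{a_i}$ into $(qx_{i+1}/x_0)_{a_i}$ and cancels the existing factor in $(qx_n/x_i)_{a_n}$ to yield $(x_0/x_{i+1})_{a_n}$, so that the rearranged product is exactly the $i=0$ block of the shifted $q$-Dyson product.
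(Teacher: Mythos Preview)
Your proof is correct and follows essentially the same approach as the paper's: both verify $\pi(D_n(\mathbf{x},\mathbf{a},q))=D_n(\mathbf{x},(a_n,a_0,\ldots,a_{n-1}),q)$ and then use the $\pi$-invariance of $\CT_{\mathbf{x}}$ on Laurent polynomials. You simply spell out in detail what the paper dismisses as ``straightforward to check,'' including the explicit monomial argument for why $\CT_{\mathbf{x}}F=\CT_{\mathbf{x}}\pi(F)$.
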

\begin{proof}
It is straightforward to check that
$$\pi \big(D_n(\mathbf{x},\mathbf{a},q)\big)= D_n\big(\mathbf{x},(a_n,a_0,\ldots,a_{n-1}),q\big).$$
Note that an equivalent form was observed by Kadell
\cite[Equation~5.12]{kadell1998}.  Therefore, equation
\eqref{e-cyclic} follows by the above equality and the fact
$$\CT_\mathbf{x} F(x_0,x_1,\dots,x_n) =
\CT_\mathbf{x} \pi\big( F(x_0,x_1,\dots,x_n)\big).$$ 
The second part of the lemma is obvious.
\end{proof}

Next we work in the ring of Laurent series in $x_0$ with coefficients
Laurent polynomials in $x_1,x_2,\dots,x_n$. The following lemma is
a generalized form of Lemma 3.1 in \cite{gess-xin2006}. The proof
is similar.
\begin{lem}\label{lem1}
Let $L(x_1,\ldots,x_n)$ be a Laurent polynomial independent of
${a_0}$ and $x_0$. Then for fixed nonnegative integers $a_1,\ldots,a_n$
and $k\leq a$, $k\in \mathbb{Z}$ the constant term
\begin{align}\label{p1}
\CT_\mathbf{x} x_0^k L(x_1,\dots,x_n) D_n(\mathbf{x},\mathbf{a},q)
\end{align}
is a polynomial in $q^{a_0}$
of degree at most $a-k$.
\end{lem}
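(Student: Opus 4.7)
The plan is to split $D_n(\mathbf{x},\mathbf{a},q)$ into its $a_0$-dependent and $a_0$-independent pieces and to track how the $x_0$-degree and the $q^{a_0}$-degree are coupled in the former. Write $D_n(\mathbf{x},\mathbf{a},q) = G\cdot R$, where
\[
G:=\prod_{j=1}^n (x_0/x_j)_{a_0},\qquad R:=\prod_{j=1}^n (x_jq/x_0)_{a_j}\cdot\prod_{1\le i<j\le n}(x_i/x_j)_{a_i}(x_jq/x_i)_{a_j}.
\]
The factor $R$ is independent of $a_0$. Moreover, as a Laurent polynomial in $x_0$, it is supported on the exponent range $[-a,0]$: each $(x_jq/x_0)_{a_j}$ is a polynomial in $x_0^{-1}$ of degree $a_j$, and nothing else in $R$ involves $x_0$. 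Consequently $x_0^k L(x_1,\dots,x_n)\,R$ is supported on $x_0$-exponents in the interval $[k-a,\,k]$.

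Next I would expand each factor of $G$ by the $q$-binomial theorem:
\[
(x_0/x_j)_{a_0} = \sum_{s\ge 0}(-1)^s q^{\binom{s}{2}}\qbinom{a_0}{s}(x_0/x_j)^s.
\]
The crucial observation is that the Gaussian binomial $\qbinom{a_0}{s}=\prod_{i=0}^{s-1}(1-q^{a_0-i})/(q)_s$, viewed as a polynomial in the indeterminate $q^{a_0}$, has degree exactly $s$. Multiplying the $n$ expansions, one may write $G = \sum_{s\ge 0} g_s(x_1,\dots,x_n)\,x_0^s$, where the $x_1,\dots,x_n$-coefficients of $g_s$ are polynomials in $q^{a_0}$ of degree at most $s$: a contribution to $x_0^{s_1+\cdots+s_n}$ carries the scalar factor $\prod_j \qbinom{a_0}{s_j}$, whose total $q^{a_0}$-degree is $s_1+\cdots+s_n$.

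To finish, the constant term in $x_0$ of $x_0^k L R\cdot G$ equals
\[
\sum_{s\ge 0} g_s(x_1,\dots,x_n)\cdot [x_0^{-s}]\bigl(x_0^k L\,R\bigr).
\]
By the support statement for $x_0^k L R$, the bracket vanishes unless $-s \in [k-a,k]$, i.e., unless $s\le a-k$ (the hypothesis $k\le a$ is exactly what makes this range nonempty). Hence only finitely many terms survive, each of $q^{a_0}$-degree at most $s\le a-k$, and since $\CT_{x_1,\dots,x_n}$ is a $q^{a_0}$-linear operation it preserves this degree bound. The only substantive step is the identification of the $q^{a_0}$-degree of $\qbinom{a_0}{s}$; everything else is exponent bookkeeping.
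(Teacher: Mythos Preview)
Your proof is correct and follows essentially the same strategy as the paper: expand the $a_0$-dependent Pochhammers via the $q$-binomial theorem and observe that the $q^{a_0}$-degree of each term equals its $x_0$-degree, so the constraint coming from extracting $x_0^0$ forces the $q^{a_0}$-degree to be at most $a-k$. The only difference is cosmetic: the paper first merges $(x_0/x_j)_{a_0}(x_jq/x_0)_{a_j}$ into $q^{\binom{a_j+1}{2}}(-x_j/x_0)^{a_j}(x_0q^{-a_j}/x_j)_{a_0+a_j}$ and then expands, obtaining the exact equation $k_1+\cdots+k_n=a-k$ and the binomials $\qbinom{a_0+a_j}{k_j}$, whereas you expand $(x_0/x_j)_{a_0}$ alone with $\qbinom{a_0}{s_j}$ and obtain the inequality $s_1+\cdots+s_n\le a-k$; either route yields the same degree bound, and yours avoids the preliminary identity.
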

\begin{proof}
It is easy to prove that
\begin{align*}
\left(\frac{x_0}{x_j} \right)_{\!\!a_0}\!\left(\frac{x_j}{x_0}q
\right)_{\!\!a_j}&=q^{\binom{a_j+1}{2}}\left(-\frac{x_j}{x_0}\right)^{\!\!a_j}
\!\left(\frac{x_0}{x_j} q^{-a_j}\right)_{\!\!a_0+a_j}
\end{align*}
for all integers $a_0$, where both sides are regarded as Laurent
series in $x_0$. Rewrite \eqref{p1} as
\begin{align}
\label{e-product} \CT_{\mathbf{x}}\: x_0^kL_1(x_1,\dots
,x_n) \prod_{j=1}^n q^{\binom{a_j+1}{2}}
\left(-\frac{x_j}{x_0}\right)^{\!\!a_j}\! \left(\frac{x_0}{x_j}
q^{-a_j}\right)_{\!\!a_0+a_j}\! ,
\end{align}
where $L_1(x_1,\dots,x_n)$ is a Laurent polynomial in
$x_1,\dots, x_n$ independent of $x_0$ and $a_0$.

The well-known $q$-binomial theorem \cite[Theorem
2.1]{andrew-qbinomial} is the identity
\begin{align}
\label{e-qbinomial} \frac{(bz)_\infty}{(z)_\infty} =
\sum_{k=0}^\infty \frac{(b)_k}{(q)_k} z^k.
\end{align}
Setting $z=uq^n$ and $b=q^{-n}$ in \eqref{e-qbinomial}, we obtain
\begin{align}\label{e-qbinomialn}
(u)_n=\frac{(u)_\infty}{(uq^n)_\infty}=
\sum_{k=0}^\infty q^{k(k-1)/2}\qbinom{n}{k} (-u)^k
\end{align}
for all integers $n$, where $\qbinom{n}{k}=\frac{(q)_n}{(q)_k(q)_{n-k}}$ is the $q$-binomial coefficient.

Using \eqref{e-qbinomialn}, we see
that for $1\le j\le n$, {\small
\begin{align*} q^{\binom{a_j+1}{2}}
\left(-\frac{x_j}{x_0}\right)^{\!\!a_j} \left(\frac{x_0}{x_j}
  q^{-a_j}\right)_{\!\!a_0+a_j}
=\sum_{k_j\geq 0}C(k_j) \qbinom{a_0+a_j}{k_j}
x_0^{k_j-a_j}x_j^{a_j-k_j},
\end{align*}}
where $C(k_j)=(-1)^{k_j+a_j}q^{\binom{a_j+1}2 + \binom {k_j}2
-k_ja_j}$.

Expanding the product in \eqref{e-product} and  taking constant
term in $x_0$, we see that \eqref{p1} becomes
{\small\begin{align} \label{e-midle} \sum_{\mathbf{k}}
\qbinom{a_0+a_1}{k_1}\qbinom{a_0+a_2}{k_2}\cdots
\qbinom{a_0+a_n}{k_n} \CT_{x_1,\dots,x_n} L_2(x_1,\dots
,x_n;\mathbf{k}),
\end{align}}
where $L_2(x_1, \dots, x_n;\mathbf{k})$ is a Laurent
polynomial in $x_1, \dots, x_n$ independent of $a_0$ and the sum
ranges over all sequences $\mathbf{k}=(k_1,\dots, k_n)$ of
nonnegative integers satisfying $k_1+k_2+\cdots+k_n=a-k.$ Since
$\qbinom{a_0+a_i}{k_i}$ is a polynomial in $q^{a_0}$ of degree
$k_i$, each summand in \eqref{e-midle} is a polynomial in $q^{a_0}$
of degree at most $k_1+k_2+\cdots +k_n=a-k$, and so is the sum.
\end{proof}

Lemma \ref{lem1} reduces the proof of Theorem \ref{t-main-thm} to
evaluating the constant term at enough values of the $q^{a_0}$'s.
This is accomplished by the following Main Lemmas 1 and 2. Their
proofs will be given in the next three sections, using the field
of iterated Laurent series \cite{xiniterate}.

\begin{lem}[Main Lemma 1]\label{lem-main1}
If $a_0$ belongs to the set $\{0,-1,\ldots,-(a+1)\}\setminus
\{-(a-\sigma(T)+1)\mid T\subseteq I\}$, then
\begin{align}\label{main-lem}
\CT_{\mathbf{x}}\frac{x_{j_1}^{p_1}\cdots x_{j_{\nu}}^{p_{\nu}}}{x_{i_1}x_{i_2}\cdots x_{i_m}}D_n(\mathbf{x},\mathbf{a},q) =0.
\end{align}
\end{lem}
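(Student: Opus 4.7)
The plan is to follow the Gessel-Xin Laurent series method and establish the vanishing at each prescribed value of $a_0$ separately. Set $F:=\frac{x_{j_1}^{p_1}\cdots x_{j_{\nu}}^{p_{\nu}}}{x_0\,x_{i_2}\cdots x_{i_m}}\,D_n(\mathbf{x},\mathbf{a},q)$. At $a_0=-k$ with $0\le k\le a+1$ one should interpret $F$ as a rational function in the iterated Laurent series field introduced in Section~3, in which constant term extraction is well-defined. The identity
\begin{equation*}
\left(\frac{x_0}{x_j}\right)_{a_0}\!\left(\frac{x_j}{x_0}q\right)_{a_j}=q^{\binom{a_j+1}{2}}\left(-\frac{x_j}{x_0}\right)^{a_j}\!\left(\frac{x_0}{x_j}q^{-a_j}\right)_{a_0+a_j}
\end{equation*}
used in the proof of Lemma~\ref{lem1} makes the $a_0$-dependence explicit: at $a_0=-k$, each combined two-factor block is a polynomial in $x_0$ when $a_j\ge k$, and has controllable simple poles at $x_0=x_j q^l$ otherwise. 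The boundary case $a_0=0$ falls out immediately: every $(x_0/x_j)_{0}$ equals $1$, so $D_n|_{a_0=0}=\prod_{j\ge 1}(x_j q/x_0)_{a_j}\cdot D_{n-1}$ is a polynomial in $1/x_0$ with $x_0^0$ coefficient equal to $D_{n-1}$; the prefactor $1/x_0$ in $F$ then shifts every monomial into strictly negative powers of $x_0$, forcing $\CT_{x_0}F|_{a_0=0}=0$.

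For the remaining values $a_0=-k$ with $1\le k\le a+1$ not in the excluded set, I plan to compute $\CT_{x_0}F$ by partial fraction decomposition in $x_0$. The poles in $x_0$ come from the prefactor $1/x_0$ together with the Pochhammer denominators $(x_0q^{-a_j}/x_j)_{a_j-k}^{-1}$ arising whenever $a_j<k$, each contributing a simple pole at $x_0=x_jq^l$ for $l$ in a specified range. For each residue, the resulting expression in $x_1,\dots,x_n$ is itself a constant term of a lower-rank $q$-Dyson style product with numerator and denominator indices shifted in a predictable way, and I expect to invoke the generalized Gessel-Xin lemmas of Section~3 to recognize each residue as either identically zero (by a direct $x_0$-degree bound or an $(n-1)$-dimensional degree count) or as a shifted instance treatable by induction on $n$. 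The surviving residues should be indexed by subsets $T\subseteq I$, and a nontrivial contribution should arise only when the resonance $k=a-\sigma(T)+1$ is met for some such $T$---precisely the values excluded from the hypothesis.

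The main obstacle lies in the third step: keeping track of residues in the presence of the extra numerator $\prod_l x_{j_l}^{p_l}$ and the denominator factors $\prod_l 1/x_{i_l}$, neither of which appears in Gessel and Xin's original $q$-Dyson argument. These factors shift the $x_0$-degree balance and interact with the pole locations in a way that must be carefully tracked to match the combinatorial formula~\eqref{e-LT} for $L(T)$ and to distinguish between the sets $I$ and $I_0$. I expect this is the technical content of the generalizations and auxiliary lemmas that Section~4 is devoted to; the key new input beyond Gessel and Xin is extending the vanishing technique to numerator-decorated $q$-Dyson products and verifying that the residue contributions at $a_0=-k$ outside the excluded set sum to zero after taking constant terms in the remaining variables.
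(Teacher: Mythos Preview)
Your overall strategy---partial fraction decomposition in $x_0$, residue analysis, and tracking which values of $a_0$ permit nonzero contributions---matches the paper's approach, and your treatment of the boundary case $a_0=0$ is exactly right. However, there is a genuine gap in your description of what happens after the first residue extraction. When you substitute $x_0=x_{r_1}q^{k_1}$ into $Q(h)$, the result is \emph{not} a lower-rank $q$-Dyson style product in $x_1,\dots,x_n$, and there is no clean induction on $n$ available. The substituted expression $Q(h\mid r_1;k_1)$ still involves all of $x_1,\dots,x_n$, with $x_0$ now tied to $x_{r_1}$; the factors $(x_i/x_j)_{a_i}(x_jq/x_i)_{a_j}$ for $i,j\ge 1$ are untouched, while the former $x_0$-blocks become messy rational functions of $x_{r_1}$ and the other $x_j$.

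What the paper actually does (Section~4) is iterate the partial fraction step \emph{within the same problem}: after taking the residue in $x_0$, it takes the residue in $x_{r_1}$, producing $Q(h\mid r_1,r_2;k_1,k_2)$, and so on, with the induction running on $n-s$ (the number of variables not yet substituted) rather than on $n$. The key technical inputs are Lemma~\ref{lem-lead1}, which shows that each $Q(h\mid\mathbf r;\mathbf k)$ either vanishes outright (by the combinatorial Lemma~\ref{lem-import}, the tournament argument) or is proper in $x_{r_s}$ so the iteration continues, and Lemma~\ref{lem-lead2}, which pins down that the \emph{only} way the iteration can fail to terminate in zero is when $h=a-\sigma(T)+1$ for some $T\subseteq I$---precisely the excluded values. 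Your intuition that the numerator $\prod x_{j_l}^{p_l}$ and the extra denominator factors $1/x_{i_l}$ shift the degree balance is correct; that shift is exactly what is tracked in the degree formula \eqref{deg} and is what produces the condition \eqref{deg1} and ultimately the characterization in Lemma~\ref{lem-lead2}. But you will not be able to shortcut this via ``lower-rank $q$-Dyson products'' and induction on $n$; the iterated substitution machinery $E_{\mathbf r,\mathbf k}$ is essential.
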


\begin{lem}[Main Lemma 2]\label{lem-main2}
If $a_0$ belongs to the set $\{-(a-\sigma(T)+1)\mid T\subseteq
I\}$, then
\begin{align}\label{e-mainlem2}
\CT_{\mathbf{x}}\frac{x_{j_1}^{p_1}\cdots x_{j_{\nu}}^{p_{\nu}}}{x_{i_1}x_{i_2}\cdots x_{i_m}}D_n(\mathbf{x},\mathbf{a},q) =
\sum_{T}(-1)^{w+d}q^{L^*(T)}\frac{(q)_w(q)_{a-w}}{(q)_{a_1}\cdots
(q)_{a_n}},
\end{align}
where
the sum
ranges over all $T\subseteq I$ such that $-(a-\sigma(T)+1)=a_0$
and
\begin{align}\label{e-LT*}{\small
L^*(T)=\sum_{l\in I}\sum_{i=l}^nw_i-\sum_{l=1}^{\nu}p_l\sum_{i=j_l}^nw_i-{w+1\choose 2}-1.}
\end{align}
\end{lem}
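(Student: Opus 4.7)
The plan is to work within the Laurent series framework developed in the proof of Lemma~\ref{lem1}. After multiplying $D_n(\mathbf{x},\mathbf{a},q)$ by the monomial factor $x_{j_1}^{p_1}\cdots x_{j_\nu}^{p_\nu}/(x_{i_1}x_{i_2}\cdots x_{i_m})$, I rewrite each pair $\left(\tfrac{x_0}{x_j}\right)_{a_0}\!\left(\tfrac{x_j}{x_0}q\right)_{a_j}$ using the identity displayed in the proof of Lemma~\ref{lem1}, so that the integrand becomes a Laurent series in $x_0$ with Laurent-polynomial coefficients in $x_1,\dots,x_n$. Expanding each factor $\left(\tfrac{x_0}{x_j}q^{-a_j}\right)_{a_0+a_j}$ by the $q$-binomial identity~\eqref{e-qbinomialn} and taking the constant term in $x_0$ produces, as in \eqref{e-midle}, a finite sum indexed by $\mathbf{k}=(k_1,\dots,k_n)$ of a product of $q$-binomial coefficients $\qbinom{a_0+a_j}{k_j}$ times the constant term in $x_1,\dots,x_n$ of an explicit Laurent polynomial independent of $a_0$.

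The crux is the specialization $a_0=-(a-\sigma(T)+1)=-(w+1)$. At this value the top arguments $a_0+a_j=a_j-w-1$ are negative integers whenever $j\notin T$ (since $a_j$ then contributes to $w$), while for $j\in T$ they remain within the range where the factors must be rewritten by the inversion $(u)_n=(-u)^n q^{\binom{n}{2}}(u^{-1}q^{1-n})_n$. I expect the combined effect, processed through the iterated-Laurent-series partial fraction machinery of \cite{gess-xin2006}, to force the $\mathbf{k}$-tuples contributing nonzero constant terms to lie in a "$T$-peak" locus: $k_j$ is pinned for $j\notin T$ and free within a finite range for $j\in T$. The residual constant term in $x_1,\dots,x_n$ then reduces to a $q$-Dyson product on the smaller index set $\{1,\dots,n\}\setminus T$, which Theorem~\ref{thm-dyson} evaluates to $(q)_w/\prod_{j\notin T,\,j\neq 0}(q)_{a_j}$.

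Assembling the prefactors, the signs $(-1)^{a_j}$ from the $\left(-x_j/x_0\right)^{a_j}$ terms with $j\notin T$ combine with the sign $(-1)^d$ coming from an orientation of the $d$ elements of $T$ to produce $(-1)^{w+d}$. The $q$-powers contributed by $C(k_j)=(-1)^{k_j+a_j}q^{\binom{a_j+1}{2}+\binom{k_j}{2}-k_ja_j}$ and by the specialized $q$-binomial coefficients, together with the correction $\binom{w+1}{2}+1$ generated by the inversion identity on the $j\in T$ factors and the cyclic shift implicit in Lemma~\ref{l-pi}, are to combine into $q^{L^*(T)}$; the residual factors from $j\in T$ reassemble, using $\sigma(T)=a-w$, into $(q)_{a-w}$, giving the right-hand side of \eqref{e-mainlem2} after summing over the admissible $T\subseteq I$.

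The principal obstacle will be verifying rigorously that only the "$T$-peak" configurations survive and that all other $\mathbf{k}$-tuples yield zero after taking the constant term in $x_1,\dots,x_n$. This is the analogue, at a distinguished point in parameter space, of the vanishing argument of Main Lemma~\ref{lem-main1}, but it is substantially more delicate because at $a_0=-(w+1)$ several Pochhammer factors degenerate simultaneously, so the polynomial-degree arguments of \cite{gess-xin2006} must be refined into a residue-style analysis in the field of iterated Laurent series and supplemented by an induction on $|T|$. I anticipate that tracking the exponent $L^*(T)$ term by term throughout--identifying each of its three summands with a specific rewriting step rather than reconstructing it at the end--will be essential to avoid bookkeeping errors in the final sign and $q$-power collation.
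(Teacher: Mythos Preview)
Your proposal is a plan rather than a proof, and it diverges from the paper's argument in a way that leaves a genuine gap. The paper does \emph{not} evaluate the constant term via the $q$-binomial expansion~\eqref{e-midle}; that expansion is used only to establish polynomiality in $q^{a_0}$. The actual proof works with $Q(h)$ at $h=-a_0$ and iteratively applies the partial-fraction Lemma~\ref{lem-almostprop}, producing the objects $Q(h\mid\mathbf{r};\mathbf{k})$. Lemmas~\ref{lem-lead1} and~\ref{lem-lead2} force every surviving term to have $\mathbf{r}=\mathbf{r}^*=(1,\dots,\widehat{t_1},\dots,\widehat{t_d},\dots,n)$ for some $T\subseteq I$ with $h=a-\sigma(T)+1$, and Lemma~\ref{lem-import} then pins $\mathbf{k}$ to the single value $\mathbf{k}^*$. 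The remaining term $\CT_{\mathbf{x}}Q(h^*\mid\mathbf{r}^*;\mathbf{k}^*)$ is evaluated in Lemma~\ref{lem-compute} using the almost-proper clause of Lemma~\ref{lem-almostprop}. Your route---expand first by the $q$-binomial theorem, then hope the partial-fraction machinery isolates ``$T$-peak'' tuples---has no visible selection mechanism: after~\eqref{e-midle} the inner constant terms $\CT_{x_1,\dots,x_n}L_2(\,\cdot\,;\mathbf{k})$ are independent of $a_0$ and hence cannot see $T$, and the individual coefficients $\qbinom{a_0+a_j}{k_j}$ at $a_0=-(w+1)$ do not vanish for $j\notin T$ (the top argument $a_j-w-1$ is a negative integer, so no factor $1-q^{a_j-w-l}$ in the numerator is zero). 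So the claimed pinning of $k_j$ for $j\notin T$ is unsupported.

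There is also a concrete reversal in your bookkeeping. In the paper's computation the variables indexed by $R^*=\{1,\dots,n\}\setminus T$ are the ones eliminated by the substitution $E_{\mathbf{r}^*,\mathbf{k}^*}$; what remains free are the variables $x_{t_1},\dots,x_{t_d}$, and the residual $q$-Dyson product $B_2$ lives on the index set $T$, evaluating by Theorem~\ref{thm-dyson} to $(q)_{a-w}/\prod_{l\in T}(q)_{a_l}$. The factor $(q)_w$ arises not from a $q$-Dyson product but from the product identity of Lemma~\ref{lem-u0} applied to the collected $q$-factorials $A_2$. Your attribution of $(q)_w$ to a $q$-Dyson on $\{1,\dots,n\}\setminus T$ therefore misidentifies the source of that factor and leaves $(q)_{a-w}$ unaccounted for.
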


The following lemma shows that Main Lemmas 1 and 2 coincide with our Main Theorem.
\begin{lem}\label{main2-1}
If $a_0$ belongs to the set $\{-(a-\sigma(T)+1)\mid T\subseteq I\}$,
then {\small\begin{align}\label{lem-main6}
\frac{(q)_{a+a_0}}{(q)_{a_0}(q)_{a_1}\cdots(q)_{a_n}}\sum_{\varnothing\ne
T\subseteq I_0}(-1)^dq^{L(T)}
\frac{1-q^{\sigma(T)}}{1-q^{1+a_0+a-\sigma(T)}}
=\sum_{T}(-1)^{w+d}q^{L^*(T)}\frac{(q)_w(q)_{a-w}}{(q)_{a_1}\cdots
(q)_{a_n}},
\end{align}}
where the last sum ranges over all $T\subseteq I$ such that
$-(a-\sigma(T)+1)=a_0$, $L^*(T)$ is defined as in \eqref{e-LT*}, and
$L(T)$ is defined as in \eqref{e-LT}.

If $a_0$ belongs to the set $\{0,-1,\ldots,-(a+1)\}\setminus
\{-(a-\sigma(T)+1)\mid T\subseteq I\}$, then the left-hand side of
\eqref{lem-main6} vanishes.
\end{lem}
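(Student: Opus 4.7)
The plan is to view both sides of \eqref{lem-main6} as rational functions of $s=q^{a_0}$ and reduce the verification to a clean zero--pole cancellation together with a short bookkeeping identity on exponents.

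First I would rewrite the prefactor using $(q)_{a+a_0}/(q)_{a_0}=(q^{a_0+1})_a=\prod_{j=1}^{a}(1-q^{a_0+j})$, a polynomial in $q^{a_0}$ with simple zeros at $a_0=-1,-2,\dots,-a$. Let $S(a_0)$ denote the sum on the LHS. Split $S$ into \emph{Type~A} terms ($0\in T$, i.e.\ $T=\{0\}\cup T'$ for some $T'\subseteq I$) and \emph{Type~B} terms ($T\subseteq I$ nonempty). For Type~A the $a_0$ inside $\sigma(T)=a_0+\sigma(T')$ cancels against the $a_0$ in the denominator, so $1-q^{1+a_0+a-\sigma(T)}$ collapses to the $a_0$-free factor $1-q^{1+a-\sigma(T')}$; only Type~B produces true poles in $s$, located at $a_0=-(a+1-\sigma(T))$. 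Moreover, the $a_0$-independent part of $L(T)$ (obtained by setting $w_0=0$), call it $\hat L(T')$, coincides for the Type~A term $T=\{0\}\cup T'$ and for the Type~B term $T=T'$, while $L$ itself picks up an extra $a_0$ in the Type~B case.

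Next, for each $\varnothing\ne T'\subseteq I$ I would pair the Type~A term with $T=\{0\}\cup T'$ against the Type~B term with $T=T'$. A short direct calculation over a common denominator simplifies the numerator to $(q^{a_0}-1)(1-q^{1+a_0+a})$ and yields
\[
(-1)^{|T'|}q^{\hat L(T')}\,\frac{(q^{a_0}-1)(1-q^{1+a_0+a})}{(1-q^{1+a-\sigma(T')})(1-q^{1+a_0+a-\sigma(T')})}.
\]
Together with the unpaired $T=\{0\}$ contribution $-q^{L(\{0\})}(1-q^{a_0})/(1-q^{a+1})$, this gives the factorisation $S(a_0)=(1-q^{a_0})\widetilde S(a_0)$ with $\widetilde S$ an explicit rational function. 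With this in hand the two parts of the lemma become routine. For the second part (non-special $a_0$): at $a_0=0$ the factor $(1-q^{a_0})$ itself vanishes, so LHS $=0$; at $a_0=-j$ with $1\le j\le a$ and no $T\subseteq I$ satisfying $\sigma(T)=a+1-j$, the prefactor vanishes while $\widetilde S$ has no pole there, so again LHS $=0$. For the first part at $a_0=-(a+1-s_0)$ with $1\le s_0\le a$ and $s_0=\sigma(T^\ast)$ for some $T^\ast\subseteq I$, the simple zero of the prefactor at the factor $j=a+1-s_0$ cancels exactly the poles in $\widetilde S$ coming from those $T\subseteq I$ with $\sigma(T)=s_0$. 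A direct residue evaluation of $\prod_{j\ne a+1-s_0}(1-q^{a_0+j})$ at $a_0=s_0-a-1$ produces $(-1)^wq^{-\binom{w+1}{2}}(q)_w(q)_{s_0-1}$ with $w=a-s_0$, and collecting the remaining factors shows each such $T$ contributes exactly $(-1)^{w+d}q^{L^\ast(T)}(q)_w(q)_{a-w}/((q)_{a_1}\cdots(q)_{a_n})$. The final exponent match reduces to the identity $L(T)-L^\ast(T)=w_0+w+1+\binom{w+1}{2}$ read off by comparing \eqref{e-LT} and \eqref{e-LT*}; at the special value one has $w_0=a_0=-(w+1)$ for Type~B, so $w_0+w+1=0$ and the identity collapses to $L(T)-L^\ast(T)=\binom{w+1}{2}$, precisely what is needed.

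The main technical obstacle I foresee is the edge case $a_0=-(a+1)$ (corresponding to $T^\ast=\emptyset$), where the prefactor does \emph{not} vanish. Here the factorisation is essential: in $\widetilde S$ the sum over $\varnothing\ne T'\subseteq I$ carries the factor $1-q^{1+a_0+a}$, which vanishes at $a_0=-(a+1)$, and the remaining denominators $1-q^{-\sigma(T')}$ are nonzero in generic parameters, so only the $T=\{0\}$ piece survives. This collapses to $S(-(a+1))=q^{L(\{0\})-(a+1)}$; multiplying by the nonzero prefactor value $(-1)^aq^{-\binom{a+1}{2}}(q)_a$ and invoking the same exponent identity (now with $T=\emptyset$, $w=a$, $w_0=-(a+1)$) recovers exactly the single-term RHS. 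A secondary remark is that several $T\subseteq I$ may share the same value of $\sigma(T)$; the factorisation treats each pole linearly, so the residue picks up each of them independently, which is why Main Lemma~2 writes the RHS as a sum rather than a single term.
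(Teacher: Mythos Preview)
Your proof is correct and follows essentially the same approach as the paper's: both arguments hinge on pairing each $T\subseteq I$ with $T\cup\{0\}$ (using $L(T)=L(T\cup\{0\})+a_0$), on the zero/pole cancellation between the prefactor $(q^{a_0+1})_a$ and the denominators $1-q^{1+a_0+a-\sigma(T)}$, and on the same exponent bookkeeping identity $L(T)\big|_{a_0=-(w+1)}=L^\ast(T)+\binom{w+1}{2}$. The only organizational difference is that you carry out the pairing once at the outset to obtain the global factorization $S(a_0)=(1-q^{a_0})\widetilde S(a_0)$, whereas the paper applies the pairing separately at $a_0=0$ and $a_0=-(a+1)$ and, for the intermediate values $a_0=-1,\dots,-a$, works directly with the unfactored sum (noting that only Type~B terms have poles and computing each contribution via the displayed identity \eqref{e-LL}); your route through $\widetilde S$ produces the same residue after one extra cancellation of $(1-q^{a_0})(1-q^{1+a-\sigma(T')})$ against $(1-q^{1+a_0+a})$ at the special point.
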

\begin{proof}
Let $LHS$ and $RHS$ denote the left-hand side and the right-hand
side of \eqref{lem-main6} respectively. By definition, $L(T)=L(T\cup
\{0\})+a_0$ for any $T\subseteq I$. This fact will be used.

If $a_0=0$, then simplifying gives
$$LHS=\frac{(q)_a}{(q)_{a_1}\cdots (q)_{a_n}} \sum_{T\subseteq I_0}(-1)^dq^{L(T)}
\frac{1-q^{\sigma(T)}}{1-q^{1+a-\sigma(T)}},   $$ where we have added the
vanishing term corresponding to $T=\varnothing $. The sum equals $0$ since for
every $T \subseteq I$, when pairing the summand for $T$ and the
summand for $T\cup \{0\}$, we have
$$(-1)^dq^{L(T)}
\frac{1-q^{\sigma(T)}}{1-q^{1+a-\sigma(T)}}+(-1)^{d+1}q^{L(T\cup
\{0\})} \frac{1-q^{\sigma(T\cup \{0\})}}{1-q^{1+a-\sigma(T\cup
\{0\})}}=0.$$

If $a_0=-a-1$, then the sum for $RHS$ has only one term
corresponding to $T=\varnothing$. For $LHS$, simplifying gives
$$
LHS=\frac{(q)_{-1}}{(q)_{-a-1}(q)_{a_1}\cdots(q)_{a_n}}\sum_{\varnothing\ne
T\subseteq I_0}(-1)^{d+1}q^{L(T)+\sigma(T)}.
$$
Since for any $T\subseteq I$, we have
\begin{multline*}
\qquad \qquad\qquad \qquad(-1)^{d+1}q^{L(T)+\sigma(T)}+(-1)^{d+2}q^{L\big(T\cup \{0\}\big)+\sigma\big(T\cup \{0\}\big)}\\
=(-1)^{d+1}q^{\big(L(T)+\sigma(T)\big)}\big(1-q^{-a_0+a_0}\big)=0,
\qquad \qquad\qquad \qquad
\end{multline*}
$LHS$ reduces to only one term corresponding to $T=\{0\}$,
which is {\small\begin{align*}
LHS=&(-1)^{2}q^{L(\{0\})+a_0}\frac{(q)_{-1}}{(q)_{-a-1}(q)_{a_1}\cdots(q)_{a_n}}
=q^{L(\{0\})+a_0}\frac{\big(1-\frac{1}{q}\big)\cdots\big(1-\frac{1}{q^a}\big)}{(q)_{a_1}\cdots(q)_{a_n}}\\
=&(-1)^aq^{L(\{0\})-a-1-{a+1\choose 2}}\frac{(q)_a}{(q)_{a_1}\cdots(q)_{a_n}}
=(-1)^aq^{L^{*}(\varnothing)}\frac{(q)_a}{(q)_{a_1}\cdots(q)_{a_n}}=RHS.
\end{align*}}

Now consider the cases $a_0=-1,\dots,-a$. Since the factor
$(q)_{a_0+a}/(q)_{a_0}=(1-q^{a_0+1})\cdots (1-q^{a_0+a})$ of $LHS$
vanishes for $a_0=-1,-2,\dots,-a$, the summand with respect to $T$
has no contribution unless the denominator
$1-q^{1+a_0+a-\sigma(T)}=0$, i.e., $a_0=-\big(a+1-\sigma(T)\big)$.
Therefore, $LHS=0$ if $a_0$ does not belong to
$\{-(a-\sigma(T)+1)\mid T\subseteq I\}$. If it is not the case, then
only those terms with $-(a-\sigma(T)+1)=a_0$ have contributions.
Such $T$ can not contain $0$, for otherwise we may deduce that
$a+1-\sigma(T\setminus \{0\})=0$, which is impossible. Therefore it
suffices to show that for every subset $T\subseteq I$ we have
{\small\begin{align}\label{e-LL}
\frac{(q)_{a+a_0}}{(q)_{a_0}\cdots(q)_{a_n}}(-1)^dq^{L(T)}&
\frac{1-q^{\sigma(T)}}{1-q^{1+a_0+a-\sigma(T)}}\Big|_{a_0=-w-1}
=&\frac{(q)_w(q)_{a-w}}{(q)_{a_1}\cdots
(q)_{a_n}}(-1)^{w+d}q^{L^*(T)}.
\end{align}}
Since $L(T)|_{a_0=-w-1}=L^*(T)+{w+1\choose 2}$, the left-hand side
of \eqref{e-LL} equals {\small\begin{align*}
(-1)&^dq^{L^*(T)+{w+1\choose
2}}\frac{\big[(1-q^{-w})\cdots(1-q^{-1})\big]\big[(1-q)\cdots
(1-q^{a-w})\big]}{(q)_{a_1}\cdots (q)_{a_n}}
=(-1)^{w+d}q^{L^*(T)}\frac{(q)_w(q)_{a-w}}{(q)_{a_1}\cdots(q)_{a_n}},
\end{align*}}
which is the right-hand side of \eqref{e-LL}.
\end{proof}

\begin{proof}[Proof of Theorem {\bf\ref{t-main-thm}}]
We prove the theorem by showing that both sides of \eqref{e-main}
are polynomials in $q^{a_0}$ of degree no more than $a+1$, and that
they agree at the $a+2$ values corresponding to
$a_{0}=0,-1,\ldots,-a-1$. The latter statement follows by Main Lemma
1, Main Lemma 2, and Lemma \ref{main2-1}. We now prove the former
statement to complete the proof.

 Applying
Lemma \ref{lem1} in the case $k=-1$ and
$L(x_1,\ldots,x_n)=x_{j_1}^{p_1}\cdots x_{j_{\nu}}^{p_{\nu}}/({x_{i_2}\cdots
x_{i_m}})$, we see that the constant term in \eqref{e-main} is a
polynomial in $q^{a_0}$ of degree at most $a+1$. The right-hand side
of \eqref{e-main} can be written as
$$\sum_{\varnothing \ne T \subseteq I_0} (-1)^d q^{L(T)}
\frac{1-q^{\sigma(T)}}{1-q^{a_0+1+a-\sigma(T)}}\frac{(1-q^{a_0+1})(1-q^{a_0+2})
\cdots (1-q^{a_0+a})}{(q)_{a_1}(q)_{a_2}\cdots (q)_{a_n}}. $$ This
is a polynomial in $q^{a_0}$ of degree no more than $a+1$, as can be seen by checking the two cases:
If $0\not \in T$ then the degree of $q^{L(T)}$ in $q^{a_0}$ is
$1$ and $1-q^{a_0+1+a-\sigma(T)}$ cancels with the numerator so that
the summand has degree $a$ in $q^{a_0}$; Otherwise the summand has
degree $a+1$ in $q^{a_0}$.
\end{proof}

The $m=0$ case of Theorem \ref{t-main-thm} reduces to the
Zeilberger-Bressoud $q$-Dyson Theorem. Comparing with the proof of
Theorem \ref{thm-dyson} in \cite{gess-xin2006}, the new part is
Lemma \ref{lem-main2}, where we give explicit formula for the
non-vanishing case $a_0=-a-1$. This gives a proof without using
induction on $n$.

\begin{proof}[Proof of Corollary {\bf\ref{conj-s1}}]
Applying the Main Theorem for $I_{0}=\{0\}$ gives
\begin{align*}
L(\{0\})=\sum_{i=0}^nw_i-\sum_{i=r}^nw_i=\sum_{i=1}^na_i-\sum_{i=r}^na_i=\sum_{i=1}^{r-1}a_i.
\end{align*}
Substituting the above into \eqref{e-main} and simplifying, we
obtain Corollary \ref{conj-s1}.
\end{proof}

\begin{proof}[Proof of Corollary {\bf\ref{conj-s2}}]
Applying the Main Theorem for $I_{0}=\{0,t\}$ and $p_1=2$ gives
{\small\begin{align*}
L(\{0\})=&\sum_{i=1}^na_i+\sum_{i=t}^na_i-2\sum_{i=r}^na_i, \\
L(\{t\})=&\sum_{i=0}^na_i+\sum_{i=t}^na_i-2\sum_{i=r}^na_i-2a_t, \\
L(\{0,t\})=&\sum_{i=1}^na_i+\sum_{i=t}^na_i-2\sum_{i=r}^na_i-2a_t.
\end{align*}}
Substituting the above into \eqref{e-main} and simplifying,
we obtain Corollary \ref{conj-s2}.
\end{proof}

\begin{proof}[Proof of Corollary {\bf\ref{conj-s3}}]
Applying the Main Theorem for $I_{0}=\{0,t\}$ and $p_1=p_2=1$ gives
{\begin{align*}
L(\{0\})=&\sum_{i=1}^na_i+\sum_{i=t}^na_i-\sum_{i=r}^na_i-\sum_{i=s}^na_i, \\
L(\{t\})=&\left\{%
\begin{array}{ll}
    \sum_{i=0}^na_i+\sum_{i=t}^na_i-\sum_{i=r}^na_i-\sum_{i=s}^na_i-a_t, &\ \hbox{if}\ \ r<t<s, \\
\sum_{i=0}^na_i+\sum_{i=t}^na_i-\sum_{i=r}^na_i-\sum_{i=s}^na_i-2a_t, &\ \hbox{if}\ \ t<r<s ,\\\end{array}%
\right. \\
L(\{0,t\})=&\left\{%
\begin{array}{ll}
    \sum_{i=1}^na_i+\sum_{i=t}^na_i-\sum_{i=r}^na_i-\sum_{i=s}^na_i-a_t, &\ \hbox{if}\ \ r<t<s, \\
\sum_{i=1}^na_i+\sum_{i=t}^na_i-\sum_{i=r}^na_i-\sum_{i=s}^na_i-2a_t, &\ \hbox{if}\ \ t<r<s .\\\end{array}%
\right.
\end{align*}}
Substituting the above into \eqref{e-main} and simplifying,
we obtain Corollary \ref{conj-s3}.
\end{proof}

\section{Constant term evaluations and basic lemmas}

From now on, we let $K=\CC(q)$, and assume that all series are in
the field of iterated Laurent series $K\langle\!\langle x_n,
x_{n-1},\ldots,x_0\rangle\!\rangle
=K(\!(x_n)\!)(\!(x_{n-1})\!)\cdots (\!(x_0)\!)$. This means that all
series are regarded first as Laurent series in $x_0$, then as
Laurent series in $x_1$, and so on. The reason for choosing
$K\langle\!\langle x_n, x_{n-1},\ldots,x_0\rangle\!\rangle$ as a
working field has been explained in \cite{gess-xin2006}. For more
detailed account of the properties of this field, with other
applications, see \cite{xinresidue} and \cite{xiniterate}.

We emphasize that the field of rational functions is a subfield
of $K\langle\!\langle x_n, x_{n-1},\ldots,x_0\rangle\!\rangle$, so
that every rational function is identified with its unique iterated
Laurent series expansion. The series expansions of $1/(1-q^k
x_i/x_j)$ will be especially important. If $i<j$ then
$$\frac{1}{1-q^k x_i/x_j}=\sum_{l= 0}^\infty q^{kl} x_i^l x_j^{-l}.$$
However, if $i>j$ then this expansion is not valid and instead we
have the expansion
{\small$$ \frac{1}{1-q^k x_i/x_j}=\frac1{-q^k x_i/x_j(1-q^{-k}x_j/x_i)}
    =\sum_{l=0}^\infty -q^{-k(l+1)} x_i^{-l-1}x_j^{l+1}.$$}

The constant term of the series $F(\mathbf{x})$ in $x_i$,
denoted by $\CT_{x_i} F(\mathbf{x})$, is defined to be the sum of
those terms in $F(\mathbf{x})$ that are free of $x_i$.
It follows that
\begin{equation}
\label{e-ct} \CT_{x_i} \frac{1}{1-q^k x_i/x_j} =
\begin{cases}
    1, & \text{ if }i<j, \\
    0, & \text{ if }i>j. \\
\end{cases}
\end{equation}
We shall call the monomial $M=q^k x_i/x_j$ \emph{small} if $i<j$ and
\emph{large} if $i>j$.  Thus the constant term in $x_i$ of $1/(1-M)$
is $1$ if $M$ is small and $0$ if $M$ is large.

An important property of the constant term operators defined in this
way is their commutativity:
$$\CT_{x_i} \CT _{x_j} F(\mathbf{x}) = \CT_{x_j} \CT_{x_i} F(\mathbf{x}).$$
Commutativity implies that the constant term in a set of variables
is well-defined, and this property will be used in our proof of the
two Main Lemmas. (Note that, by contrast, the constant term
operators in \cite{zeil} do not commute.)

The \emph{degree} of a rational function of $x$ is the degree in $x$
of the numerator minus the degree  in $x$ of the denominator. For
example, if $i\ne j$ then  the degree  of $1-x_j/x_i=(x_i-x_j)/x_i$
is $0$ in $x_i$ and $1$ in $x_j$. A rational function is called
\emph{proper} (resp. \emph{almost proper}) in $x$ if its degree in
$x$ is negative (resp. zero).

Let
\begin{align}\label{e-defF}
F=\frac{p(x_k)}{x_k ^d \prod_{i=1}^m (1-x_k/\alpha_i)}
\end{align}
be a rational function of $x_k$, where $p(x_k)$ is a polynomial in
$x_k$, and the $\alpha_i$ are distinct monomials, each of the form
$x_t q^s$. Then the partial fraction decomposition of $F$ with
respect to $x_k$ has the following form:
{\small\begin{align}\label{e-defFs}
F=p_0(x_k)+\frac{p_1(x_k)}{x_k^d}+\sum_{j=1}^m \frac{1}{1-
x_k/\alpha_j}  \left. \left(\frac{p(x_k)}{x_k^d \prod_{i=1,i\ne j}^m
(1-x_k/\alpha_i)}\right)\right|_{x_k=\alpha_j},
\end{align}}
where $p_0(x_k)$
is a polynomial in $x_k$, and $p_1(x_k)$ is a polynomial in $x_k$ of
degree less than $d$.

The following lemma is the
basic tool in extracting constant terms.
\begin{lem}\label{lem-almostprop}
Let $F$ be as in \eqref{e-defF} and \eqref{e-defFs}.
 Then
\begin{align}\label{e-almostprop}
\CT_{x_k} F=p_0(0) +\sum_j  \bigl(F\,
(1-x_k/\alpha_j)\bigr)\Bigr|_{x_k =\alpha_j},
\end{align}
where 
the sum ranges over all $j$ such that $x_k/\alpha_j$ is small. In
particular, if $F$ is proper in $x_k$, then $p_0(x_k)=0$; if $F$ is
almost proper in $x_{k}$, then
$p_0(x_k)=(-1)^m\prod_{i=1}^m\alpha_{i}\LC_{x_{k}}p(x_k)$, where
$\LC_{x_k}$ means to take the leading coefficient with respect to
$x_k$.
\end{lem}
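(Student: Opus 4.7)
My plan is to apply the constant term operator $\CT_{x_k}$ to each summand of the partial fraction decomposition \eqref{e-defFs} and combine the results. The polynomial piece contributes $\CT_{x_k} p_0(x_k) = p_0(0)$. The piece $p_1(x_k)/x_k^d$ contributes $0$, because $\deg p_1 < d$ forces every monomial in $p_1(x_k)/x_k^d$ to carry a strictly negative power of $x_k$. For each simple-pole piece $c_j/(1-x_k/\alpha_j)$, with $c_j = \bigl(F(1-x_k/\alpha_j)\bigr)\bigr|_{x_k=\alpha_j}$, equation \eqref{e-ct} evaluates $\CT_{x_k}$ of this piece as $c_j$ when $x_k/\alpha_j$ is small and $0$ when it is large. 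Summing over the $j$ with $x_k/\alpha_j$ small yields exactly \eqref{e-almostprop}.

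For the two identifications of $p_0$, I would argue by comparing degrees in $x_k$ at the level of rational functions. Each of $p_1(x_k)/x_k^d$ and $c_j/(1-x_k/\alpha_j)$ is proper in $x_k$, so the $x_k$-degree of $F$ equals the $x_k$-degree of the polynomial $p_0$. If $F$ is proper in $x_k$ then $\deg p_0 < 0$, forcing $p_0 \equiv 0$. If $F$ is almost proper of degree $0$, then $p_0$ must be a constant equal to the leading behavior of $F$ in $x_k$; from \eqref{e-defF} this leading coefficient is $\LC_{x_k} p(x_k) \big/ \prod_{i=1}^m (-\alpha_i^{-1}) = (-1)^m \bigl(\prod_{i=1}^m \alpha_i\bigr)\LC_{x_k} p(x_k)$, matching the stated formula.

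There is really no substantial obstacle: the lemma is a direct consequence of partial fractions together with \eqref{e-ct} applied to each single geometric-series pole. The only point requiring mild care is keeping straight the two iterated Laurent series expansions of $1/(1-x_k/\alpha_j)$ discussed immediately before the lemma, but this is precisely what the small/large dichotomy records, so the bookkeeping is routine.
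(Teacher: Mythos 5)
Your argument is correct and is exactly the intended one: the paper omits the proof, noting only that the lemma is the general form of Gessel--Xin's Lemma 4.1 and that the proof is straightforward, namely applying $\CT_{x_k}$ termwise to the partial fraction decomposition \eqref{e-defFs} and using \eqref{e-ct} for the simple poles. Your degree comparison for identifying $p_0$ in the proper and almost proper cases, including the leading-coefficient computation $(-1)^m\prod_i\alpha_i\,\LC_{x_k}p(x_k)$, is also the standard and correct justification of the ``in particular'' clause.
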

Lemma \ref{lem-almostprop} is the general form of \cite[Lemma
4.1]{gess-xin2006} and the proof is also straightforward. The new
observation is that we have explicit formulas not only for proper
$F$ but also for almost proper $F$. Such explicit formulas are
useful in predicting the final result when iterating Lemma
\ref{lem-almostprop}.

The following slight generalization of \cite[Lemma
4.2]{gess-xin2006} plays an important role in our argument.
\begin{lem}\label{lem-import}
Let $a_{1},\ldots,a_{s}$ be nonnegative integers. Then for any
positive integers $k_{1},\ldots,k_{s}$ with $1\leq k_{i}\leq
a_{1}+\cdots+a_{s}+1$ for all $i$, either $1\leq k_{i}\leq a_{i}$
for some $i$ or $-a_{j}\leq k_{i}-k_{j}\leq a_{i}-1$ for some $i<j$,
except only when $k_{i}=a_{i}+\cdots+a_{s}+1$ for $i=1,\ldots,s$.
\end{lem}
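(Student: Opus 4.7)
The plan is to prove the contrapositive by induction on $s$. Thus, assuming that $k_i\ge a_i+1$ for every $i$, and that for every pair $i<j$ either $k_i-k_j\ge a_i$ or $k_j-k_i\ge a_j+1$ holds, I will show that $k_i=c_i:=a_i+\cdots+a_s+1$ for all $i$. The base case $s=1$ is immediate, since then $a_1+1\le k_1\le a_1+1$.

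For the inductive step, the central intermediate claim is that $k_1$ is the maximum of $k_1,\ldots,k_s$. Suppose for contradiction that $j^*>1$ is the smallest index attaining $M:=\max_i k_i$; then $k_i<M$ for every $i<j^*$. The pair hypothesis applied to $(i,j^*)$ for such $i$ forces $k_{j^*}-k_i\ge a_{j^*}+1$ (the other disjunct fails since $k_i-k_{j^*}<0\le a_i$), giving $k_i\le M-a_{j^*}-1$. Similarly, for $i>j^*$, the pair $(j^*,i)$ forces $k_{j^*}-k_i\ge a_{j^*}$ (the other disjunct fails since $k_i-k_{j^*}\le 0<a_i+1$), giving $k_i\le M-a_{j^*}$. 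Thus every $k_l$ with $l\ne j^*$ satisfies $k_l\le M-a_{j^*}\le c_1-a_{j^*}=\bigl(\sum_{l\ne j^*}a_l\bigr)+1$, which is exactly the upper bound needed to invoke the inductive hypothesis on the sub-tuple $(k_l)_{l\ne j^*}$ with parameters $(a_l)_{l\ne j^*}$; both standing hypotheses are inherited from the full tuple. The induction forces the sub-exception, so in particular $k_1=c_1-a_{j^*}$. But then the pair $(1,j^*)$ gives $k_{j^*}\ge k_1+a_{j^*}+1=c_1+1$, contradicting $k_{j^*}\le c_1$. Hence $j^*=1$.

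Once $k_1$ is known to be the maximum, the pair hypothesis on $(1,j)$ for each $j>1$ must give $k_1-k_j\ge a_1$ (the other disjunct would force $k_j>k_1$), so $k_j\le k_1-a_1\le c_1-a_1=a_2+\cdots+a_s+1$. The sub-tuple $(k_2,\ldots,k_s)$ with parameters $(a_2,\ldots,a_s)$ now satisfies the inductive hypothesis for $s-1$ in place of $s$, yielding $k_j=c_j$ for every $j\ge 2$. Combining $k_1\ge k_2+a_1=c_1$ with $k_1\le c_1$ finally gives $k_1=c_1$.

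The main obstacle will be the first step, establishing that $k_1$ is the maximum: a naive contradiction attempt from $k_{j^*}>k_1$ produces only $k_{j^*}\ge k_1+a_{j^*}+1$, which by itself is compatible with the upper bound $k_{j^*}\le c_1$. The essential trick is to delete $k_{j^*}$ from the tuple and apply the inductive hypothesis to the remaining $s-1$ entries: this tightens the upper bound to $c_1-a_{j^*}$, pins down $k_1=c_1-a_{j^*}$, and creates the one-unit gap needed for the pair $(1,j^*)$ to deliver the final contradiction.
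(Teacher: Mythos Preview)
Your proof is correct, and it takes a genuinely different route from the paper's. The paper argues globally: for each pair $i<j$ it records which disjunct holds as an arc in a tournament on $\{1,\dots,s\}$, with weight $a_i$ or $a_j+1$ attached; since the weight on any arc $u\to v$ is at most $k_v-k_u$ and every ascending arc carries positive weight, the tournament must be acyclic, hence a linear order. Summing weights along this order and comparing with the hypothesis $k_i\le a_1+\cdots+a_s+1$ forces the order to be $s\to s-1\to\cdots\to 1$ and pins down each $k_i$.

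Your argument instead proceeds by induction on $s$, invoking the inductive hypothesis twice: first on the subtuple with the maximum index deleted, to force a contradiction unless the maximum already sits at position $1$; then on $(k_2,\dots,k_s)$ once $k_1$ is known to dominate. This avoids the tournament machinery entirely and is arguably more elementary, at the cost of a slightly more delicate bookkeeping step (checking that deleting $k_{j^*}$ really leaves a subtuple satisfying the tightened upper bound $\sum_{l\ne j^*}a_l+1$, which you handle correctly via $k_l\le M-a_{j^*}$). The paper's approach, on the other hand, yields the structural fact that the only consistent orientation is the reverse of the natural order, which is conceptually appealing and does not need a nested induction. Both arguments are of comparable length and difficulty.
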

\begin{proof}
The basic idea is the same as of \cite[Lemma 4.2]{gess-xin2006}.
Assume $k_1,\dots ,k_s$ to satisfy that for all $i$, $a_i <
k_i\le a_1+\cdots +a_s+1$, and for all $i<j,$ either $k_i-k_j\ge
a_i$ or $k_i-k_j\le -a_j-1$. Then we need to show that
$k_{i}=a_{i}+\cdots+a_{s}+1$ for $i=1,\ldots,s$.

We construct a tournament on $1,2,\dots ,s$ with numbers on the arcs
as follows: {}For $i<j$, if $k_i-k_j\ge a_i$ then we draw an arc
$i\mathop{\longleftarrow}\limits^{a_i} j$ from $j$ to $i$ and if
$k_i-k_j\le -1-a_j$ then we draw an arc
$i\mathop{\longrightarrow}\limits^{a_j+1} j$ from $i$ to $j$.

We call an arc from $u$ to $v$ an \emph{ascending arc} if $u<v$ and
a \emph{descending arc} if $u>v$.  We note two facts: (i) the number
on an arc from $u$ to $v$ is less than or equal to $k_v-k_u$, and
(ii) the number on an ascending arc is always positive.

A  consequence of (i) is that for any directed path from $e$ to $f$,
the sum along the arcs is less than or equal to $k_f-k_e$. It
follows that the sum along a cycle is non-positive. But any cycle
must have at least one ascending arc, and by (ii) the number on this
arc is positive, and so the sum along the cycle is positive. Thus
there can be no cycles.

Therefore the tournament we have constructed is transitive, and
hence defines a total ordering $\rightarrow$ on $1,2,\dots ,s$.
Assume the total ordering is given by $i_1\rightarrow i_2\rightarrow
\cdots \rightarrow i_{s-1}\rightarrow i_s$. Then $k_{i_s}-k_{i_1}\ge
a_{i_2}+a_{i_3}+\cdots +a_{i_s}$. This implies that
\begin{align}\label{e-contradiction}
k_{i_s}&\ge k_{i_1}+a_{i_2}+a_{i_3}+\cdots+a_{i_s}\nonumber\\
    & \ge a_{i_1}+1+a_{i_2}+a_{i_3}+\cdots +a_{i_s}\nonumber\\
    &=a_1+a_2+\cdots +a_s+1,
\end{align}
By assumption, $1\leq k_{i}\leq a_{1}+\cdots+a_{s}+1$ for all $i$,
so $k_{i_s}=a_1+a_2+\cdots +a_s+1$. But for the equality in
\eqref{e-contradiction} to hold, we must have $k_{i_1}=a_{i_1}+1$,
and there are no arcs of the form
$i_{l-1}\mathop{\longrightarrow}\limits^{a_{i_{l}}+1} i_{l}$ (i.e.,
$i_{l-1}<i_l $) for $l=2,3,\dots,s$.
It follows that the total ordering $i_1\rightarrow i_2\rightarrow
\cdots \rightarrow i_{s-1}\rightarrow i_s$ is actually $s\rightarrow
(s-1)\rightarrow \cdots \rightarrow 2\rightarrow 1$. One can then
deduce that
\begin{align*} k_{i_{l}}=a_{i_{1}}+\cdots+a_{i_{l}}+1,
\quad \mbox{for}\quad l=1,\ldots,s.
\end{align*}
This completes our proof.
\end{proof}

\section{The general setup and the proof of Main Lemma 1}

 Fix a monomial $M(\mathbf{x})=\prod_{i=0}^{n}x_i^{b_i}$
with $\sum_{i=0}^{n} b_i=0$. We derive general properties for
$q$-Dyson style constant terms, and specialize $M(\mathbf{x})$ for
the proofs of our main lemmas.

Define $Q(h)$ to be
\begin{align}\label{qh}
Q(h):=M(\mathbf{x}) \prod_{j=1}^{n}
\left(\frac{x_0}{x_j}\right)_{\!\!\!-h}\left(\frac{x_j}{x_0}q\right)_{\!\!\!a_j}
 \prod_{1\leq i<j\leq n}
 \left(\frac{x_i}{x_j}\right)_{\!\!\!a_i}\left(\frac{x_j}{x_i}q\right)_{\!\!\!a_j}.
\end{align}
If $h\ge 0$, then
\begin{align}\label{qh1}
Q(h)=\prod_{i=0}^{n}x_i^{b_i}\prod_{j=1}^{n} \frac{(x_jq/x_0)_{a_j}}
{\big(1-\frac{x_0}{x_jq}\big)\big(1-\frac{x_0}{x_jq^2}\big)\cdots
\big(1-\frac{x_0}{x_jq^h}\big)}
 \prod_{1\leq i<j\leq n} \left(\frac{x_i}{x_j}\right)_{\!\!\!a_i}\left(\frac{x_j}{x_i}q\right)_{\!\!\!a_j}.
\end{align}
We are interested in the constant term of $Q(h)$ for
$h=0,1,2,\dots,a+1$.

Since the degree in $x_0$ of $1-x_jq^i/x_0$ is zero, the degree in
$x_0$ of $Q(h)$ is  $b_0-nh$. Thus when $h>\frac{b_0}{n}$, $Q(h)$ is
proper. Applying Lemma \ref{lem-almostprop}, we have
\begin{align}\label{qh2}
\CT_{x_0}Q(h)=\sum_{\substack{0<r_1\leq n,\\ 1\leq k_1\leq
h}}Q(h\mid r_1;k_1),
\end{align}
where
$$
Q(h\mid r_1;k_1)=Q(h)\left(1-\frac{x_0}{x_{r_1}q^{k_1}}\right)\bigg|_{x_0=x_{r_1}q^{k_1}}.
$$
For each term in \eqref{qh2} we will extract the constant term in
$x_{r_1}$, and then perform further constant term extractions,
eliminating one variable at each step. In order to keep track of the
terms we obtain, we introduce some notations from
\cite{gess-xin2006}.

For any rational function $F$ of $x_0, x_1, \ldots , x_n$, and for
sequences of integers $\mathbf{k} = (k_1,\ldots, k_s)$ and
$\mathbf{r} = (r_1, r_2,\ldots, r_s)$ let $E_{\mathbf{r,k}} F$ be
the result of replacing $x_{r_i}$ in $F$ with $x_{r_s}q^{k_s-k_i}$
for $i = 0, 1,\ldots , s-1$, where we set $r_0 = k_0 = 0$. Then for
$0 < r_1 < r_2 < \cdots < r_s \leq n$ and $0 < k_i \leq h$, we
define
\begin{align}\label{qh3}
Q(h\mid\mathbf{r;k})=Q(h\mid r_1,\ldots,r_s;k_1,\ldots,k_s)=
E_{\mathbf{r,k}}\left[Q(h)\prod_{i=1}^{s}\Big(1-\frac{x_0}{x_{r_i}q^{k_i}}\Big)\right].
\end{align}
Note that the product on the right-hand side of \eqref{qh3} cancels
all the factors in the denominator of $Q$ that would be taken to
zero by $E_{\mathbf{r,k}}$.

\begin{lem}\label{lem-lead1}
Let $R=\{r_0,r_1,\dots,r_s\}$. Then the rational functions
$Q(h\mid \mathbf{r;k})$ have the following two properties:
\begin{itemize}
 \item[{\bf i}] If $1\leq k_i\leq a_{r_1}+\cdots+a_{r_s}$ for all $i$ with $1\leq i\leq s$ and $h>\frac{b_0}{n}$,
then $ Q(h\mid \mathbf{r;k})=0$.
\item[{\bf ii}] If $k_i>a_{r_1}+\cdots+a_{r_s}$ for some $i$ with $1
\leq i \leq s<n$, and if
\begin{align}\label{deg1}
h>a_{r_1}+\cdots +a_{r_s}+\frac{\sum_{{i\in R}}b_i}{n-s},
\end{align}
then
\begin{align}\label{qh4}
\CT_{x_s}Q(h\mid \mathbf{r;k})= \sum_{\substack{r_s<r_{s+1}\leq n,\\
1\leq k_{s+1}\leq h}}
Q(h\mid r_1,\ldots,r_s,r_{s+1};k_1,\ldots,k_s,k_{s+1}).
\end{align}
\end{itemize}
\end{lem}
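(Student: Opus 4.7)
The plan is to follow Gessel and Xin's proof of their Lemma~4.2 (the $M(\mathbf{x})=1$ case of the present lemma); the new monomial $M$ enters only via an extra $\sum_{i\in R}b_i$ contribution to the degree bound in (ii), while the vanishing and residue mechanics are otherwise unchanged. Throughout I read $\CT_{x_s}$ in \eqref{qh4} as $\CT_{x_{r_s}}$, i.e.\ the constant term in the most recently introduced variable.

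For part (i), I would examine which numerator factors of $Q(h\mid\mathbf{r};\mathbf{k})$ become zero after the substitution $\Erk$. Two families can vanish: for each $i\in\{1,\dots,s\}$, the factor $(x_{r_i}q/x_0)_{a_{r_i}}$ evaluates to $\prod_{l=1}^{a_{r_i}}(1-q^{l-k_i})$, which is zero iff $1\le k_i\le a_{r_i}$; and for $1\le i<j\le s$, the pair $(x_{r_i}/x_{r_j})_{a_{r_i}}(x_{r_j}/x_{r_i}q)_{a_{r_j}}$ becomes a $q$-constant that is zero iff $-a_{r_j}\le k_i-k_j\le a_{r_i}-1$. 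Under the hypothesis $1\le k_i\le a_{r_1}+\cdots+a_{r_s}$ for all $i$, the exception case $k_i=a_{r_i}+\cdots+a_{r_s}+1$ of Lemma~\ref{lem-import} is ruled out (our upper bound on each $k_i$ is strictly smaller), so at least one of the two vanishing conditions above must hold, forcing $Q(h\mid\mathbf{r};\mathbf{k})=0$.

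For part (ii), I would apply Lemma~\ref{lem-almostprop} to $Q(h\mid\mathbf{r};\mathbf{k})$ as a rational function of $x_{r_s}$, in three sub-steps. Step~1 (properness): After $\Erk$ and the cancellations built into \eqref{qh3}, the contributions to $\deg_{x_{r_s}}$ are: $M$ contributes $\sum_{i\in R}b_i$; for each $j\notin R$ the factor $(x_0/x_j)_{-h}(x_jq/x_0)_{a_j}$ contributes $-h$ through $x_0=x_{r_s}q^{k_s}$; for each pair $(r_{i'},j)$ with $r_{i'}\in R\cap\{1,\dots,n\}$ and $j\notin R$ the pairwise $q$-Dyson factor contributes $a_{r_{i'}}$; and factors whose indices lie entirely inside $R$ collapse to $q$-constants, while factors whose indices lie entirely outside $R$ do not involve $x_{r_s}$. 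Summing yields $\deg_{x_{r_s}}Q(h\mid\mathbf{r};\mathbf{k})=\sum_{i\in R}b_i+(n-s)(a_{r_1}+\cdots+a_{r_s}-h)$, strictly negative under the stated hypothesis. Step~2 (small poles): The only $x_{r_s}$-denominator factors are the $1-x_{r_s}q^{k_s-l}/x_j$ coming from $(x_0/x_j)_{-h}$ with $j\notin R$ and $l\in\{1,\dots,h\}$; in $K\langle\!\langle x_n,\ldots,x_0\rangle\!\rangle$ the corresponding simple pole at $x_{r_s}=x_jq^{l-k_s}$ is small iff $r_s<j$, and since $j\notin R$ this gives $j\in\{r_s+1,\dots,n\}$. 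Writing $r_{s+1}:=j$, $k_{s+1}:=l$ matches the summation range in \eqref{qh4} exactly. Step~3 (residue identification): The residue at $x_{r_s}=x_{r_{s+1}}q^{k_{s+1}-k_s}$ equals $Q(h\mid r_1,\dots,r_{s+1};k_1,\dots,k_{s+1})$. This follows from the composition identity $E_{(\mathbf{r},r_{s+1}),(\mathbf{k},k_{s+1})}=\bigl(x_{r_s}\mapsto x_{r_{s+1}}q^{k_{s+1}-k_s}\bigr)\circ\Erk$ together with the observation that the extra cancellation factor $1-x_0/(x_{r_{s+1}}q^{k_{s+1}})$ used to define the right-hand side pulls back, under $\Erk$, to exactly the residue factor $1-x_{r_s}/(x_{r_{s+1}}q^{k_{s+1}-k_s})$. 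Since $Q(h\mid\mathbf{r};\mathbf{k})$ is proper, the polynomial part in Lemma~\ref{lem-almostprop} vanishes and we recover \eqref{qh4}.

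The main obstacle is the residue bookkeeping in Step~3: aligning the cancellation factor built into the definition of $Q(h\mid\mathbf{r}';\mathbf{k}')$ with the residue factor produced by Lemma~\ref{lem-almostprop}, and confirming that the sibling denominator factors of $(x_0/x_{r_{s+1}})_{-h}$ at indices $l\ne k_{s+1}$ survive the double substitution as nonzero $q$-scalars rather than being cancelled inadvertently. Step~1 also demands delicate accounting, because the cross-contributions from $M(\mathbf{x})$ and from the factors eliminated by $\Erk$ must be combined correctly to produce the sharp threshold $h>a_{r_1}+\cdots+a_{r_s}+\sum_{i\in R}b_i/(n-s)$, which would not be visible from a naive variable-wise degree count applied before the substitution.
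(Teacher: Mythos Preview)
Your proposal is correct and follows essentially the same route as the paper: part~(i) via Lemma~\ref{lem-import} and the two families of vanishing numerator factors, part~(ii) via the degree count giving $d_t=(n-s)(a_{r_1}+\cdots+a_{r_s}-h)+\sum_{i\in R}b_i$, identification of the small poles, and the composition identity $T_{r_{s+1},k_{s+1}}\circ\Erk=E_{\mathbf{r}',\mathbf{k}'}$. One wording quibble: in part~(i) the exceptional tuple of Lemma~\ref{lem-import} is ruled out because $k_1$ alone would have to equal $a_{r_1}+\cdots+a_{r_s}+1$, not because the upper bound is smaller for \emph{every} $k_i$.
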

\begin{proof}
[Proof of property {\bf(i)}] By Lemma \ref{lem-import}, either $1\le
k_i\le a_{r_i}$ for some $i$ with $1\le i \le s$, or $-a_{r_j}\le
k_i-k_j\le a_{r_i}-1$ for some $i<j$, since the exceptional case can not happen. If $1\le k_i\le a_{r_i}$ then
$\Qrk$ has the factor
$$\Erk
\left[\lrq{x_{r_i}}{x_{0}}{q}_{\!\!a_{r_i}} \right]
=\lrq{x_{r_s}q^{k_s-k_i}}{x_{r_s}q^{k_s}}{q}_{\!\!a_{r_i}}
=(q^{1-k_i})_{a_{r_i}}=0.$$

If $-a_{r_j}\le k_i-k_j\le a_{r_i}-1$ where $i<j$ then $\Qrk$ has
the factor
$$\Erk\, \left[\lrq{x_{r_i}}{x_{r_j}}{}_{\!\!a_{r_i}}\!\!\lrq{x_{r_j}}{x_{r_i}}{q}_{\!\!a_{r_j}}\right], $$
which is equal to
$$q^{\binom{a_{r_j}+1}{2}} \left(-\frac{x_{r_j}}{x_{r_i}}\right)^{a_{r_j}}\!
\lrq{x_{r_i}}{x_{r_j}}{q^{-a_{r_j}}}_{\!\!a_{r_i}+a_{r_j}}
\!\!\!\!=q^{\binom{a_{r_j}+1}{2}}
(-q^{k_i-k_j})^{a_{r_j}}(q^{k_j-k_i-a_{r_j}})_{a_{r_i}+a_{r_j}}=0.
$$
\smallskip
\noindent\emph{Proof of property {\bf(ii).}} Note that since $h\ge k_i$
for all $i$, the hypothesis implies that   $h>a_{r_1}+\cdots
+a_{r_s}$.

We first show that $Q(h\mid \mathbf{r;k})$ is proper in $x_{r_s}$. To do
this we write $\Qrk$ as $N/D$, in which $N$ (the ``numerator") is
$$\Erk\, \left[\prod_{i=0}^nx_i^{b_i}\prod_{j=1}^n\lrq{x_j}{x_0}q_{\!\!a_j}
\cdot \prod_{\substack{1\le i, j\le n\\ j\neq i}}
\left(\frac{x_i}{x_j}\,q^{\chi(i>j)}\right)_{\!\!a_i}\right],$$ and
$D$ (the ``denominator") is
$$\Erk\, \left[\prod_{j=1}^n \lrq{x_0}{x_jq^h}{}_{\!\!\!h}\biggm/\prod_{i=1}^s\left(1-\frac
{x_0}{x_{r_i}q^{k_i}}\right)\right],$$
where $\chi(S)$ is $1$ if the statement $S$ is true, and $0$ otherwise. Notice that
$R=\{r_0,r_1,\dots,r_s\}$. Then the degree in $x_{r_s}$ of
$$\Erk\, \left[\left(1-\frac{x_i}{x_j}q^m\right)\right]$$
is 1 if $i\in R$ and $j\not\in R$, and is $0$ otherwise, as is
easily seen by checking the four cases. Clearly the degree in $x_{r_s}$
of $\Erk\, x_i^{b_i}$ is $b_i$ if $i\in R$ and is $0$ otherwise.
Thus the parts of $N$ contributing to the degree in
$x_{r_s}$ are

$$E_{\mathbf{r},\mathbf{k}}\left[\prod_{i\in R}x_i^{b_i}\prod_{i=1}^s \prod_{j\ne r_0,\dots ,r_s}
\left(\frac{x_{r_i}}{x_j}q^{\chi(r_i>j)}\right)_{\!\!a_{r_i}}\right],$$
which has degree
$
(n-s)(a_{r_1}+\cdots +a_{r_s})+\sum_{i\in R}b_i.
$
The parts of $D$ contributing to the degree in $x_{r_s}$ are
 $$E_{\mathbf{r},\mathbf{k}}\left[\prod_{j\ne r_0,\dots, r_s}\lrq{x_0}{x_jq^h}{}_{\!\!h}\right],$$
which has degree $(n-s)h$.

Thus the total degree of $\Qrk$ in $x_{r_s}$ is
\begin{align}\label{deg}
d_t=(n-s)(a_{r_1}+\cdots +a_{r_s} - h)+\sum_{i\in R}b_i.
\end{align}

The hypothesis \eqref{deg1} implies that $d_t<0$, so $\Qrk$ is
proper in $x_{r_s}$. Next we apply Lemma \ref{lem-almostprop}. For
any rational function $F$ of $x_{r_s}$ and integers $j$ and $k$, let
$T_{j,k} F$ be the result of replacing $x_{r_s}$ with
$x_{j}q^{k-k_s}$ in $F$. Since $x_{r_s}q^{k_s}/(x_jq^k)$ is small
when $j>r_s$ and is large when $j<r_s$, Lemma \ref{lem-almostprop}
gives
\begin{equation}
\label{e-TQ} \CT_{x_s} \Qrk =\sum_{r_s < r_{s+1}\le n\atop 1\le
   k_{{s+1}}\le h} T_{r_{s+1},k_{s+1}} \left[\Qrk
   \left(1-\frac{x_{r_s}q^{k_s}}{x_{r_{s+1}}q^{k_{s+1}}}\right)\right].
\end{equation}
We must show that the right-hand side of $\eqref{e-TQ}$ is equal to
the right-hand side of \eqref{qh4}. Set $\br'=(r_1,\dots,
r_s, r_{s+1})$ and $\bk'=(k_1,\dots, k_s, k_{s+1})$. Then the
equality follows easily from the identity
\begin{equation}
\label{e-TE} T_{r_{s+1},k_{s+1}}\circ \Erk= E_{\br',\bk'}.
\end{equation}
To see that \eqref{e-TE} holds, we have
$$
(T_{r_{s+1},k_{s+1}}\circ \Erk)\, x_{r_i}
  =T_{r_{s+1},k_{s+1}}\, \left[ x_{r_s}q^{k_s-k_i}\right]
  = x_{r_{s+1}}q^{k_{s+1}-k_i}= E_{\br',\bk'}\,  x_{r_i},
$$
and if $j\notin\{r_0,\dots, r_s\}$ then $(T_{r_{s+1},k_{s+1}}\circ
\Erk)\, x_{j}=x_j=  E_{\br',\bk'}\, x_{j}$.
\end{proof}

\medskip
Now we concentrate on proving our main lemmas. In what follows,
unless specified otherwise, we assume that
$M(\mathbf{x})=x_{j_1}^{p_1}\cdots
x_{j_{\nu}}^{p_{\nu}}\big/(x_{i_1}x_{i_{2}}\cdots x_{i_{m}})$, where the
$j$'s are different from the $i$'s, the $p$'s are positive
integers with $\sum_{i=1}^{\nu}p_i=m$, $n\geq j_{\nu}>\cdots>j_1>0$ and
$n>i_m>\cdots>i_1=0$. Note that the assumptions $i_1=0$ and $i_m<n$
are supported by Lemma \ref{l-pi}.

\begin{lem}\label{lem-lead2}
Let $M(\mathbf{x})$ be as above. If Lemma $\ref{lem-lead1}$
does not apply, then there is a subset $T=\{t_1,t_2,\dots,t_d\}$ of
$I$ such that:  $h=a-\sigma(T)+1$, $Q(h\mid \mathbf{r;k})$ is almost
proper in $x_{n}$, and
$R=\{0,1,\ldots,\widehat{t_1},\ldots,\widehat{t_d},\ldots,n\}$,
where $\widehat{t}$ denotes the omission of $t$.
\end{lem}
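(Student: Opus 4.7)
The plan is to interpret the hypothesis that Lemma~\ref{lem-lead1} does not apply as saying that $Q(h\mid\mathbf{r};\mathbf{k})$ is a surviving residue at a stopping point of the iterated constant-term extraction, and then to read off the required structure from the extremality forced by both properties failing simultaneously. Since $M(\mathbf{x})$ has $b_0=-1$ (because $i_1=0$) and $h\ge 0$, the auxiliary hypothesis $h>b_0/n$ in property {\bf(i)} is automatic, so the failure of {\bf(i)} to give zero forces some $k_i>a_{r_1}+\cdots+a_{r_s}$; combined with $1\le k_i\le h$ this gives $h\ge a_{r_1}+\cdots+a_{r_s}+1$. Meanwhile, the failure of {\bf(ii)} to allow iteration means either $s=n$, or $s<n$ together with the reverse of its degree condition, i.e.\ $h\le a_{r_1}+\cdots+a_{r_s}+\frac{\sum_{i\in R}b_i}{n-s}$.

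If $s=n$, I would take $T=\varnothing$: then $R=\{0,1,\dots,n\}$, $\sigma(T)=0$, and $h\ge a+1$ combined with the global bound $h\le a+1$ forces $h=a-\sigma(T)+1$, while the degree formula $d_t=(n-s)(a_{r_1}+\cdots+a_{r_s}-h)+\sum_{i\in R}b_i$ derived inside the proof of Lemma~\ref{lem-lead1} reduces to $0$, giving ``almost proper'' trivially. If instead $s<n$, combining the two inequalities yields $\sum_{i\in R}b_i\ge n-s$. The central step is to unpack this using the explicit form of $M(\mathbf{x})$: setting $T=\{0,1,\dots,n\}\setminus R$, so $|T|=n-s$ and (since $r_0=0\in R$) $0\notin T$, and using $\sum_{i=0}^n b_i=0$, the inequality rewrites as $\sum_{i\in T}(1+b_i)\le 0$. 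The nonzero coordinates of $M$ are $b_{i_l}=-1$ for $1\le l\le m$ and $b_{j_l}=p_l>0$ for $1\le l\le \nu$, with all other coordinates vanishing, so each summand $1+b_i$ is $\ge 0$ and equals $0$ precisely when $b_i=-1$; hence every $i\in T$ must satisfy $b_i=-1$, i.e.\ $T\subseteq I$ (the exclusion of $0$ being automatic from $0\notin T$). Equality then holds throughout, giving $h=a_{r_1}+\cdots+a_{r_s}+1=a-\sigma(T)+1$ and $d_t=-(n-s)+(n-s)=0$, so $Q(h\mid\mathbf{r};\mathbf{k})$ is almost proper, and the containment $T\subseteq\{i_2,\dots,i_m\}$ together with $i_m<n$ forces $n\in R$ and $r_s=n$.

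The main obstacle will be the sign inequality $\sum_{i\in T}(1+b_i)\le 0$: this is precisely the place where the positivity of the numerator exponents $p_l$ is used crucially, ruling out any $j$-index from belonging to $T$ and forcing $T\subseteq I$. Once this inequality is established, the remaining work is routine bookkeeping between $R$ and its complement together with a direct substitution into the degree formula; the careful separation of the two sub-cases $s=n$ versus $s<n$ is needed only to make sure the extremal point $T=\varnothing$ is not overlooked.
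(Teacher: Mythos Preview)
Your argument is correct and follows essentially the same approach as the paper: both proofs combine the failure of part~(i) (giving $h\ge a_{r_1}+\cdots+a_{r_s}+1$) with the failure of the degree condition in part~(ii) (giving $d_t\ge 0$), and then squeeze these to force equality and pin down $T$ and $R$. Your reformulation of the key step as $\sum_{i\in T}(1+b_i)\le 0$ with each summand nonnegative is a clean repackaging of the paper's bound $\frac{\sum p_l\chi(j_l\in R)-(m-d)}{n-s}\le 1$; the paper instead defines $T=I\setminus R$ up front and then shows this is the full complement, but the content is identical. Your explicit treatment of the $s=n$ case (via the standing bound $h\le a+1$) is a point the paper glosses over in its opening sentence, so if anything your version is slightly more careful there.
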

\begin{proof}
Since Lemma \ref{lem-lead1} does not apply, we must have
$k_i>a_{r_1}+\cdots+a_{r_s}$ for some $i$ with $1
         \leq i \leq s<n$. It follows that
         $h>a_{r_1}+\cdots+a_{r_s}$.

Let $T=I \setminus R$ denoted by $\{t_1,\ldots,t_d\}$. Then by
\eqref{deg}, the degree in $x_{r_s}$ of $Q(h\mid
\mathbf{r};\mathbf{k})$
         is given by
\begin{align*}
d_t=(n-s)(a_{r_1}+\cdots+a_{r_s}-h)+\sum_{i=1}^{\nu}p_i\chi(j_i\in
R)-(m-d).
\end{align*}
The hypothesis implies that $d_t\ge 0$. This is equivalent to
\begin{align*}
h-(a_{r_1}+\cdots +a_{r_s})\le \frac{\sum_{i=1}^{\nu}p_i\chi(j_i\in
R)-(m-d)}{n-s}.
\end{align*}
Notice that $s\leq n-d$ and $\sum_{i=1}^{\nu}p_i\chi(j_i\in R)\leq m$.
It follows that
$$h-(a_{r_1}+\cdots
+a_{r_s})\le \frac{\sum_{i=1}^{\nu}p_i\chi(j_i\in R)-(m-d)}{n-s}\leq
\frac{m-(m-d)}{n-(n-d)}=1,$$ and the equality holds only when
$s=n-d$ and $\sum_{i=1}^{\nu}p_i\chi(j_i\in R)= m$. The former condition
is sufficient, since if $s=n-d$ then every $j_i$ belongs to $R$.
Thus we can conclude that $h=a_{r_1}+\cdots+a_{r_s}+1$ and $d_t=0$.
This is equivalent to say that $h=a-(a_{t_1}+\cdots+a_{t_d})+1$ and
$Q(h\mid \mathbf{r;k})$ is almost proper in $x_{r_s}$. Since
$i_m<n$, we have $
R=\{0,1,\ldots,\widehat{t_1},\ldots,\widehat{t_d},\ldots,n\}. $
\end{proof}
\begin{proof}[Proof of  Main Lemma {\bf 1}]
By definition \eqref{qh} of $Q(h)$ we see that
$\CT_{\mathbf{x}}Q(-a_{0})$ equals the left-hand side of
\eqref{main-lem} if we take $M(\mathbf{x})=x_{j_1}^{p_1}\cdots
x_{j_{\nu}}^{p_{\nu}}/(x_{i_1}x_{i_2}\cdots x_{i_m})$.

Fix nonnegative integers $a_1,\dots,a_n$. Clearly if $a_0=0$, then
the left-hand side of \eqref{main-lem} is
\begin{align*}
\CT_{\mathbf{x}}\frac{x_{j_1}^{p_1}\cdots x_{j_{\nu}}^{p_{\nu}}}{x_{0}x_{i_2}\cdots x_{i_m}}
\prod_{j=1}^{n} \left(\frac{x_j}{x_0}q\right)_{a_j}
 \prod_{1\leq i<j\leq n} \left(\frac{x_i}{x_j}\right)_{a_i}\left(\frac{x_j}{x_i}q\right)_{a_j}.
\end{align*}
Since the above Laurent polynomial contains only negative powers in
$x_0$, its constant term in $x_0$ equals zero.

Now we prove by induction on $n-s$ that
$$\CT_{\mathbf{x}} \Qrk = 0,\ \ \mbox{if}\ h\in \{1,\ldots,a+1\}\setminus \{a-\sigma(T)+1\mid
T\subseteq I\}.$$
Note that taking constant term with respect to a variable that
does not appear has no effect. Also note that $h\ne
1+a-\sigma(\varnothing)=1+a_1+\cdots+a_n$.

We may assume that $s\le n$ and $0<r_1<\cdots<r_s\le n$, since
otherwise $\Qrk$ is not defined. If $s=n$ then $r_i$ must equal $i$
for $i=1,\dots ,n$. Thus $\Qrk=Q(h\Mid 1,2,\dots, n; k_{1},
k_{2},\dots, k_{n})$, which is 0 by part (i) of Lemma
\ref{lem-lead1} and the fact that $k_i\le h\le a_1+\cdots + a_n$ for
each $i$.

Now suppose $0\le s<n$. Since $b_0=-1$, the condition
$h>\frac{b_0}{n}=-\frac{1}{n}$ always holds. If part (i) of Lemma
\ref{lem-lead1} applies, then $\Qrk=0$. Otherwise, by Lemma
\ref{lem-lead2}, part (ii) of Lemma \ref{lem-lead1} applies and
\eqref{qh4} holds. Therefore, applying $\CT_{\mathbf{x}}$ to both
sides of \eqref{qh4} gives
$$
\CT_{\mathbf{x}}\Qrk =\sum_{r_s < r_{s+1}\le n\atop 1\le
k_{{s+1}}\le h}
    \CT_{\mathbf{x}}Q(h\Mid r_1,\dots, r_s, r_{s+1};k_1,\dots,
    k_s,k_{s+1}).
$$
By induction, every term on the right is zero.
\end{proof}

\section{Proof of Main Lemma 2}

The proof of Main Lemma 2 relies on Lemma \ref{lem-almostprop} for
almost proper rational functions. It involves complicated
computations. By the proof of Main Lemma 1, Lemma \ref{lem-lead2}
describes all cases for $\CT_\mathbf{x} Q(h\mid
\mathbf{r},\mathbf{k})\ne 0$. To evaluate such cases, we need the
following two lemmas.

\begin{lem}\label{lem-u0}
\begin{align}\label{e-u0}
&\prod_{l=1}^n\frac{(q^{-\sum_{i=l}^nw_i})_{w_l}}{(q)_{\sum_{i=l}^nw_i}(q^{-\sum_{i=1}^{l-1}w_i})_{\sum_{i=1}^{l-1}w_i}}
\prod_{1\leq i<j\leq n}\big(q^{-\sum_{l=i}^{j-1}w_l}\big)_{w_i}\big(q^{\sum_{l=i}^{j-1}w_l+1}\big)_{w_j}\nonumber \\
=&(-1)^{w}q^{-{w+1\choose 2}}\frac{(q)_{w}}{(q)_{w_1}\cdots (q)_{w_{n}}},
\end{align}
where $w=w_1+\cdots+w_{n}$.
\end{lem}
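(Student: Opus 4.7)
My approach is direct algebraic simplification. The key preliminary step is to rewrite every Pochhammer symbol in the LHS via the standard identities
$(q^{-m})_k = (-1)^k q^{\binom{k}{2}-mk}(q)_m/(q)_{m-k}$
for the negative-base factors and $(q^s)_k = (q)_{s+k-1}/(q)_{s-1}$ for the positive-base factors. After this substitution the LHS splits as a product of three independent pieces --- an overall sign, a single power of $q$, and a product of ordinary $q$-factorials $(q)_k$ --- each of which I would match to the corresponding piece of the RHS.

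Let $S_l = \sum_{i=l}^n w_i$, $U_l = w - S_l = \sum_{i=1}^{l-1} w_i$, and $V(p,q) = w_p + w_{p+1} + \cdots + w_q$ (so $W_{i,j-1} = V(i,j-1)$). The sign piece has exponent $\sum_l(w_l+U_l) + \sum_{i<j}w_i = (n+1)w - \sum_i i w_i + \sum_i(n-i)w_i$, which collapses to $w$ modulo $2$ and so matches the RHS. The $q$-exponent collects to
$T = -\sum_l S_l w_l - \sum_{i<j}V(i,j-1)w_i + \sum_i (n-i+1)\binom{w_i}{2} + \sum_l \binom{U_l+1}{2}$,
and the engine of its simplification is the telescoping identity $\binom{U_{l+1}+1}{2} - \binom{U_l+1}{2} = U_l w_l + \binom{w_l+1}{2}$ (itself a consequence of $\binom{a+k}{2}-\binom{a}{2} = ak+\binom{k}{2}$). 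Summing this from $l=1$ to $n$ (with $U_1=0$ and $U_{n+1}=w$) yields $\binom{w+1}{2} = \sum_l U_l w_l + \sum_l \binom{w_l+1}{2}$, and a brief summation-by-parts rearrangement then identifies $T$ with $-\binom{w+1}{2}$.

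For the $q$-factorial part the LHS becomes
$\prod_{l=2}^{n}\frac{1}{(q)_{V(l,n)}(q)_{V(1,l-1)}} \cdot \prod_{1\le i<j\le n}\frac{(q)_{V(i,j)}}{(q)_{V(i+1,j-1)}}$,
and I would tally the net multiplicity of $(q)_{V(p,q)}$ for each pair with $1\le p\le q\le n$. The bookkeeping shows that every off-diagonal pair $(p,q)$ with $p<q$ other than $(1,n)$ appears once in the numerator and once in the denominator and so cancels; the diagonal pairs $(p,p)$ appear only in the denominator with multiplicity $1$, contributing the factor $\prod_i 1/(q)_{w_i}$; and the unique pair $(1,n)$ appears only in the numerator, contributing the factor $(q)_w$. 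Combining the three pieces yields exactly $(-1)^w q^{-\binom{w+1}{2}} (q)_w / \prod_i (q)_{w_i}$, which is the RHS. The main obstacle is precisely this pair-by-pair accounting: the boundary cases $p=1$, $q=n$, and $p=q$ are what distinguish the three surviving/cancelling patterns, so one must enumerate carefully the three sources of denominator factors (from the two $A$-type products and from $(q)_{V(i+1,j-1)}$ in the $B$-type product) to confirm that $(1,n)$ is the sole survivor in the numerator and the diagonal pairs are the sole survivors in the denominator.
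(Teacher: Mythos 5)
Your proposal is correct, but it takes a genuinely different route from the paper. The paper proves the identity by induction on $n$: writing the left side as $H_n$ and the right side as $G_n$, it checks $H_1=G_1$ and then verifies $H_n/H_{n-1}=G_n/G_{n-1}$, so that only the factors involving the index $n$ ever need to be manipulated at once. You instead expand every Pochhammer symbol globally via $(q^{-m})_k=(-1)^kq^{\binom{k}{2}-mk}(q)_m/(q)_{m-k}$ and $(q^s)_k=(q)_{s+k-1}/(q)_{s-1}$, and then match the sign, the $q$-power, and the $(q)_k$-factors separately. I checked the three pieces: the sign exponent is $w+2\sum_i(n-i)w_i\equiv w$; the interval bookkeeping for the $q$-factorials is right (the numerator contributes $(q)_{V(p,q)}$ for all $p<q$, the three denominator sources contribute $[p,n]$ for $p\ge 2$, $[1,q]$ for $q\le n-1$, and $[p,q]$ for $2\le p\le q\le n-1$, so indeed only $[1,n]$ survives upstairs and the singletons survive downstairs); and the exponent $T$ does reduce to $-\binom{w+1}{2}$ — the diagonal terms collapse to $-\sum_i\binom{w_i+1}{2}$ and the cross terms to $-\sum_{i<j}w_iw_j$, which combine by the identity $\binom{w+1}{2}=\sum_i\binom{w_i+1}{2}+\sum_{i<j}w_iw_j$ underlying your telescoping lemma. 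The trade-off: the paper's induction localizes all computation to a single ratio and avoids your global tallies, while your direct approach is non-inductive and makes visible exactly which factor of the left side produces each factor $(q)_w$ and $1/(q)_{w_i}$ of the right side. The only part of your write-up that is still a sketch is the ``brief summation-by-parts rearrangement'' for $T$; it is routine but not brief, and in a final version you should display that computation in full.
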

\begin{proof}
Denote the left-hand side of \eqref{e-u0} by $H_n$ and the
right-hand side by $G_n$. Clearly we have $H_1=G_1$. To show that
$H_n=G_n$, it suffices to show that $H_n/H_{n-1}=G_n/G_{n-1}$ for
$n\ge 2$. We have

\begin{align*}
\frac{H_{n}}{H_{n-1}}
=&\frac{(q^{-w_{n}})_{w_n}}{(q)_{w_n}(q^{-w_1-\cdots-w_{n-1}})_{w_1+\cdots+w_{n-1}}}
\prod_{l=1}^{n-1}\frac{(q^{-w_l-\cdots-w_n})_{w_l}}{(q^{w_l+\cdots+w_{n-1}+1})_{w_n}
(q^{-w_l-\cdots-w_{n-1}})_{w_l}}\nonumber \\
&\cdot\prod_{l=1}^{n-1}(q^{-w_l-\cdots-w_{n-1}})_{w_l}(q^{w_l+\cdots+w_{n-1}+1})_{w_n}\nonumber\\
=&\frac{(-1)^{w_n}q^{-{w_n+1\choose 2}}}{(-1)^{w-w_n}q^{-{w-w_n+1\choose 2}}(q)_{w-w_n}}
\prod_{l=1}^{n-1}(-1)^{w_l}q^{-{w_l+1\choose 2}-w_l(w_{l+1}+\cdots+w_n)}(q^{w_{l+1}+\cdots+w_n+1})_{w_l}.
\end{align*}
Since it is straightforward to show that
$$
\prod_{l=1}^{n-1}q^{-{w_l+1\choose
2}-w_l(w_{l+1}+\cdots+w_n)}=q^{-{w-w_n+1\choose 2}-w_n(w-w_n)}$$ and
that
$$\prod_{l=1}^{n-1}(q^{w_{l+1}+\cdots+w_n+1})_{w_l}=(q^{w_n+1})_{w-w_n},
$$
we have
\begin{align*}
\frac{H_{n}}{H_{n-1}}=(-1)^{w_n}q^{-{w_n+1\choose 2}-w_n(w-w_n)}\frac{(q^{w_n+1})_{w-w_n}}{(q)_{w-w_n}},
\end{align*}
which is equal to $G_n/G_{n-1}$.
\end{proof}

\medskip
For fixed subset $T=\{t_1,t_2,\dots,t_d\}$ of $I$, we let
$h^{*}=a-\sigma(T)+1=w+1$, $\mathbf{r}^{*}=
(1,\ldots,\widehat{t_1},\ldots,\widehat{t_d},\ldots,n)$, and
$\mathbf{k^{*}}=(k_1,\ldots,k_{n-d})$ with
$k_{l}=\sum_{i=r_l}^nw_i+1$. Let
\begin{align}\label{defNl}
N_{l}=\# \{t_j<l\mid t_j\in T\},
\end{align}
where $\# S$ is the cardinality of the set $S$. Then
$E_{\mathbf{r^*},\mathbf{k^*}}x_{i}$ is
$x_{n}q^{k_{n-d}-k_{i-N_i}}$ for $i\notin T$, and is $x_i$ for $i\in T$.
For $i\notin T$, we have $k_{n-d}-k_{i-N_i}=w_{n}-\sum_{l=i}^nw_l$.

\begin{lem}\label{lem-compute}
Let $T$ be a subset of $I$. Then
\begin{align}\label{e-u1}
\CT_{\mathbf{x}}Q(h^*\mid \mathbf{r^{*}}; \mathbf{k^{*}})=(-1)^{w+d}q^{L^*(T)}\frac{(q)_w(q)_{a-w}}{(q)_{a_1}\cdots
(q)_{a_n}},
\end{align}
where  {\small$$L^*(T)=\sum_{l\in
I}\sum_{i=l}^nw_i-\sum_{l=1}^{\nu}p_l\sum_{i=j_l}^nw_i-{w+1\choose
2}-1.$$}
\end{lem}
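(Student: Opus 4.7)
The plan is to evaluate $\CT_{\mathbf{x}}Q(h^{*}\mid \mathbf{r^*};\mathbf{k^*})$ by iteratively applying Lemma \ref{lem-almostprop} in the order $\CT_{x_n}, \CT_{x_{t_d}}, \CT_{x_{t_{d-1}}},\ldots,\CT_{x_{t_1}}$. The key observation is that at every stage the surviving poles are ``large'' monomials in the iterated Laurent series ordering, so only the leading-coefficient (almost proper) part of Lemma \ref{lem-almostprop} contributes.

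First I would write out $Q(h^{*}\mid \mathbf{r^*};\mathbf{k^*})$ explicitly by carrying out the substitution $E_{\mathbf{r^*},\mathbf{k^*}}$: one has $x_0\mapsto x_n q^{w_n+1}$, $x_i\mapsto x_n q^{-\sum_{l=i}^{n-1}w_l}$ for $i\in R\setminus\{0,n\}$, and $x_{t_j}$ is preserved for $t_j\in T$. This separates the factors of $Q(h^{*})$ into three types: (a) those depending on $x_n$ and some $x_{t_r}$ (these come from $(x_0/x_j)_{-h^{*}}$, $(x_jq/x_0)_{a_j}$ with $j\in T$, and from the pairwise factors $(x_i/x_j)_{a_i}(x_jq/x_i)_{a_j}$ with exactly one of $i,j$ in $T$); (b) those depending on pairs $x_{t_r},x_{t_s}$ only; (c) those collapsing to pure constants in $q$. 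The $\mathbf{r^*},\mathbf{k^*}$ product $\prod(1-x_0/(x_{r^{*}_i}q^{k^{*}_i}))$ cancels exactly the vanishing denominators produced by $E_{\mathbf{r^*},\mathbf{k^*}}$.

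By Lemma \ref{lem-lead2}, $Q(h^{*}\mid\mathbf{r^*};\mathbf{k^*})$ is almost proper in $x_n$. All its $x_n$-poles $\alpha=x_{t_r}q^{s}$ satisfy $n>t_r$, so each monomial $x_n/\alpha$ is large; thus the residue sum in Lemma \ref{lem-almostprop} is empty and
\[
\CT_{x_n}Q(h^{*}\mid\mathbf{r^*};\mathbf{k^*})=(-1)^{m'}\Bigl(\prod\alpha_i\Bigr)\,\LC_{x_n}(p),
\]
where $m'$ counts the $x_n$-poles. Extracting the leading coefficient amounts to replacing each pole-contributing factor $1-cx_n/x_{t_r}$ by $-cx_n/x_{t_r}$ and dropping lower-degree terms. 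The resulting expression is almost proper in $x_{t_d}$, with all remaining poles of the form $\alpha=x_{t_j}q^{s}$ with $j<d$, i.e.\ again large. Iterating, one peels off $\CT_{x_{t_d}},\CT_{x_{t_{d-1}}},\ldots,\CT_{x_{t_1}}$ in the same way. At each step the constant term is just the corresponding leading-coefficient contribution, and after all variables are removed one obtains an explicit product over $l\in\{1,\dots,n\}$ and over pairs $1\le i<j\le n$ of $q$-Pochhammer symbols evaluated at shifted powers of $q$ depending on partial sums of the $w_i$.

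The final step is to recognize this product as the left-hand side of Lemma \ref{lem-u0}, up to a $q$-power and sign that collect the leftover contributions of $M(\mathbf{x})$ and of the cancellation factor $\prod(1-x_0/(x_{r_i^{*}}q^{k_i^{*}}))$. Applying Lemma \ref{lem-u0} transforms the product into $(-1)^{w}q^{-\binom{w+1}{2}}(q)_w/((q)_{w_1}\cdots(q)_{w_n})$; multiplying by the remaining $(q)_{a_{t_j}}$ factors (coming from the unaltered pairs inside $T$) replaces $(q)_{w_1}\cdots(q)_{w_n}$ by $(q)_{a_1}\cdots(q)_{a_n}$ and introduces $(q)_{a-w}$. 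After all $q$-exponents from $M(\mathbf{x})$, the shifts $k^{*}_i=\sum_{i=r^{*}_l}^n w_i+1$, and the $-\binom{w+1}{2}$ from Lemma \ref{lem-u0} are collected, they sum to exactly $L^{*}(T)$, while the sign becomes $(-1)^{w+d}$.

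The main obstacle is precisely this bookkeeping of signs and $q$-exponents: many factors pick up nontrivial contributions under $E_{\mathbf{r^*},\mathbf{k^*}}$ and under each leading-coefficient extraction, and verifying that the accumulated $q$-power matches
\[
L^{*}(T)=\sum_{l\in I}\sum_{i=l}^{n}w_i-\sum_{l=1}^{\nu}p_l\sum_{i=j_l}^{n}w_i-\binom{w+1}{2}-1
\]
requires a careful term-by-term audit. Lemma \ref{lem-u0} is the indispensable algebraic identity that packages the structural cancellations and converts the raw product from the iterated leading-coefficient computation into the compact closed form claimed in the lemma.
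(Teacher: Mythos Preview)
Your first step is right and matches the paper: $Q(h^{*}\mid\mathbf{r^{*}};\mathbf{k^{*}})$ is almost proper in $x_n$, every $x_n$-pole is large, and so $\CT_{x_n}$ is given entirely by the leading-coefficient term of Lemma~\ref{lem-almostprop}. The gap is what happens next. After $\CT_{x_n}$, the only denominator factors in $Q(h^{*}\mid\mathbf{r^{*}};\mathbf{k^{*}})$ containing a variable were the $(x_nq^{w_n-w}/x_{t_j})_{w+1}$ for $t_j\in T$ (see~\eqref{e-LC4}); these are precisely the $x_n$-poles you just eliminated. What survives is therefore a \emph{Laurent polynomial} in $x_{t_1},\dots,x_{t_d}$, not a rational function with poles. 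Concretely, the paper shows that
\[
\CT_{x_n}Q(h^{*}\mid\mathbf{r^{*}};\mathbf{k^{*}})
= q^{L_1(d)}A_1A_2\cdot(-1)^d\,D_d(x_{t_1},\dots,x_{t_d};a_{t_1},\dots,a_{t_d};q),
\]
where $A_1,A_2$ are variable-free and $D_d$ is the $q$-Dyson product in the $T$-variables. So your claim that ``the resulting expression is almost proper in $x_{t_d}$, with all remaining poles \ldots\ large'' is not correct: there are no remaining poles, and the degree of $D_d$ in $x_{t_d}$ is $(d-1)a_{t_d}$, not zero. The almost-proper mechanism of Lemma~\ref{lem-almostprop} simply does not iterate to $x_{t_d},\dots,x_{t_1}$.

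The paper closes this gap by invoking the Zeilberger--Bressoud Theorem~\ref{thm-dyson} for the remaining constant term:
\[
\CT_{\mathbf{x}}D_d(x_{t_1},\dots,x_{t_d};a_{t_1},\dots,a_{t_d};q)=\frac{(q)_{a-w}}{\prod_{l\in T}(q)_{a_l}}.
\]
This is the actual source of the factor $(q)_{a-w}$; it does not arise from ``unaltered pairs inside $T$'' via further leading-coefficient extractions. Lemma~\ref{lem-u0} then handles $A_2$, exactly as you say, and the $q$-exponent bookkeeping $L_1(d)+L_2(d)-\binom{w+1}{2}=L^{*}(T)$ finishes the computation. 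In short: keep your $\CT_{x_n}$ step, replace the proposed iteration over $x_{t_d},\dots,x_{t_1}$ by a single appeal to Theorem~\ref{thm-dyson}, and the rest of your outline goes through.
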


\begin{proof}

By Lemma \ref{lem-lead2}, $Q(h^*\mid \mathbf{r^{*}};
\mathbf{k^{*}})$ is almost proper in $x_{n}$. Let $R^{*}=
\{r_1,\dots,r_s\}=\{1,\ldots,n\}\setminus T$, $s=n-d$.

It is straightforward to check that for any $1\leq i<j\leq n$
\begin{equation}\label{e-LC1}
\LC_{x_{n}} E_{\mathbf{r^*},\mathbf{k^*}} \Big(\frac{x_i}{x_j}\Big)_{\!a_i}\Big(\frac{x_j}{x_i}q\Big)_{\!a_j}=
\left\{ \begin{aligned}
  & \Big(-\frac{1}{x_i}\Big)^{\!w_j}q^{{w_j+1\choose 2}+(w_{n}-\sum_{l=j}^nw_l)w_j},  &\text{ if }&   i\notin R^*, j\in R^*,  \\
  & \Big(-\frac{1}{x_j}\Big)^{\!w_i}q^{{w_i\choose 2}+(w_{n}-\sum_{l=i}^nw_l)w_i},  &\text{ if }&   i\in R^*, j\notin R^*,\\
  & \big(q^{-\sum_{l=i}^{j-1}w_l}\big)_{w_i}\big(q^{\sum_{l=i}^{j-1}w_l+1}\big)_{w_j},  &\text{ if }&   i,j \in R^*, \\
  & \Big(\frac{x_i}{x_j}\Big)_{\!a_i}\Big(\frac{x_j}{x_i}q\Big)_{\!a_j}, &\text{ if }&   i,j\notin R^*.
                          \end{aligned} \right.
                          \end{equation}
For convenience, we always assume $i<j$ within this proof if $i$ and
$j$ appears simultaneously.

Recall that $M(\mathbf{x})=x_{j_1}^{p_1}\cdots
x_{j_{\nu}}^{p_{\nu}}/(x_{i_1}x_{i_2}\cdots x_{i_m})$, we have
\begin{align}\label{e-LC2}
E_{\mathbf{r^*},\mathbf{k^*}}M(\mathbf{x})
=\frac{x_{n}^mq^{\sum_{i=1}^{\nu}p_i(k_{n-d}-k_{j_i-N_{j_i}})}}
{x_{n}^{m-d}q^{(m-d)k_{n-d}-\sum_{l\in I\setminus
T}k_{l-N_l}}x_{t_1}\cdots x_{t_d}} =
\frac{x_{n}^dq^{L_1(d)}}{x_{t_1}\cdots x_{t_d}},
\end{align}
where
\begin{align}\label{e-L1}
L_1(d)=dw_{n}+\sum_{l\in I\setminus
T}\sum_{i=l}^nw_i-\sum_{i=1}^{\nu}p_i\sum_{l=j_i}^nw_l-1.
\end{align}

It is easy to see that
\begin{align}\label{e-LC3}
\LC_{x_n}E_{\mathbf{r^*},\mathbf{k^*}}\prod_{l=1}^n\Big(\frac{x_l}{x_0}q\Big)_{a_l}=
\prod_{l\in R^*}(q^{-\sum_{i=l}^nw_i})_{w_l},
\end{align}
and that{\small\begin{align}\label{e-LC4}
E_{\mathbf{r^*},\mathbf{k^*}}\frac{\prod_{i=1}^{n-d}\left(1-
x_0/(x_{r_i}q^{k_i})\right)}{\prod_{l=1}^n\big(x_0/(x_lq^{h^*})\big)_{h^*}}
=\frac{1}{\prod_{l\in
R^*}(q)_{\sum_{i=l}^nw_i}(q^{-\sum_{i=1}^{l-1}w_i})_{\sum_{i=1}^{l-1}w_i}
\prod_{l\notin R^*}(x_nq^{w_n-w}/x_l)_{w+1}}.
\end{align}}

By the definition of $Q(h)$ in \eqref{qh1}, we have
\begin{align}
Q&(h^*\mid \mathbf{r^{*}}; \mathbf{k^{*}})\nonumber \\
&=E_{\mathbf{r^*},\mathbf{k^*}}M(x)\prod_{j=1}^{n} \frac{(x_jq/x_0)_{a_j}}
{\big(x_0/(x_jq^{h^*})\big)_{h^*}}
 \prod_{1\leq i<j\leq n} \left(\frac{x_i}{x_j}\right)_{\!a_i}\left(\frac{x_j}{x_i}q\right)_{\!a_j}
 \prod_{i=1}^{n-d}\left(1-
x_0/(x_{r_i}q^{k_i})\right).
\end{align}

 Apply Lemma
\ref{lem-almostprop} with respect to $x_n$.  Since $Q(h^*\mid
\mathbf{r^{*}}; \mathbf{k^{*}})$ has no small factors in the
denominator, the summation part in \eqref{e-almostprop} equals 0.
Thus the result can be written as
$$\LC_{x_n} E_{\mathbf{r^*},\mathbf{k^*}}M(x)\prod_{j=1}^{n} \frac{(x_jq/x_0)_{a_j}}
{\big(x_0/(x_jq^{h^*})\big)_{h^*}}
 \prod_{1\leq i<j\leq n} \left(\frac{x_i}{x_j}\right)_{\!a_i}\left(\frac{x_j}{x_i}q\right)_{\!a_j}
 \prod_{i=1}^{n-d}\left(1-
x_0/(x_{r_i}q^{k_i})\right).$$ Substituting \eqref{e-LC1},
\eqref{e-LC2}, \eqref{e-LC3}, and \eqref{e-LC4} into the result, and
then collecting similar terms, we can write
\begin{align}\label{e-w0}
\CT_{\mathbf{x}}Q(h^*\mid \mathbf{r^{*}}; \mathbf{k^{*}})
=q^{L_1(d)}A_1A_2\CT_\mathbf{x} B_1B_2.
\end{align}
Here $q^{L_1(d)}A_1$ is the collection of all powers in $q$ (only from
(\ref{e-LC4}, \ref{e-LC1})) given by
 {\small
\begin{align*}
A_1=&\prod_{l\notin R^*}q^{{w+1\choose
2}-(w+1)w_n} \prod_{i\notin R^*,j\in R^*}q^{{w_j+1\choose
2}+(w_n-\sum_{l=j}^nw_l)w_j}\prod_{i\in R^*, j\notin
R^*}q^{{w_i\choose 2}+(w_{n}-\sum_{l=i}^nw_l)w_i};
\end{align*}}
$A_2$ is the collection of all $q$-factorials (only from (\ref{e-LC3},
\ref{e-LC4}, \ref{e-LC1})) given by {\small
\begin{align*}
A_2=\prod_{l\in
R^*}\frac{(q^{-\sum_{i=l}^nw_i})_{w_l}}{(q)_{\sum_{i=l}^nw_i}(q^{-\sum_{i=1}^{l-1}w_i})_{\sum_{i=1}^{l-1}w_i}}
\prod_{i,j\in
R^*}\big(q^{-\sum_{l=i}^{j-1}w_l}\big)_{w_i}\big(q^{\sum_{l=i}^{j-1}w_l+1}\big)_{w_j};
\end{align*}}
$B_1$ is the collection of all monomial factors (only from
(\ref{e-LC2}, \ref{e-LC4}, \ref{e-LC1})) given by
 {\small
\begin{align}\label{e-B1} B_1=&\frac{1}{x_{t_1}\cdots
x_{t_d}}\prod_{l\notin R^*}(-1)^{w+1}x_{l}^{w+1} \prod_{i\notin
R^*,j\in R^*}\big(-1/x_{i}\big)^{w_j}\prod_{i\in R^*, j\notin
R^*}\big(-1/x_j\big)^{w_i}=(-1)^d;
\end{align}}
and $B_2$ is the collection of all $q$-factorials containing
variables (only from (\ref{e-LC1})) given by {\small
\begin{align*}
B_2=\prod_{i,j\notin R^*}(x_i/x_j)_{a_i}(x_jq/x_i)_{a_j}=
D_d(x_{t_1},\dots,x_{t_d};a_{t_1},\dots,a_{t_d};q).
\end{align*}}
(Note that for the $q$-Dyson Theorem, $M(x)=1$, $T=I=\varnothing$,
and hence $B_2=1$, so we do not need the next paragraph for our
alternative proof of Theorem \ref{thm-dyson}.)

It follows by Theorem \ref{thm-dyson} and \eqref{e-B1} that
\begin{align}\label{e-w3}
\CT_{\mathbf{x}} B_1B_2=\CT_{\mathbf{x}}(-1)^d\prod_{i,j\notin
R^*}(x_i/x_j)_{a_i}(x_j/x_iq)_{a_j}=(-1)^d\frac{(q)_{a-w}}{\prod_{l\in
T}(q)_{a_l}}.
\end{align}

Recall that $w_i=0$ if $i\not\in R^*$. By Lemma \ref{lem-u0} we have
\begin{align}\label{e-w1}
A_2=(-1)^{w}q^{-{w+1\choose 2}}\frac{(q)_{w}}{\prod_{l\in
R^*}(q)_{w_l}}.
\end{align}

Let
$A_1=q^{L_2(d)}$,
where
{\small\begin{align*}
L_2(d)=&\sum_{l\notin R^*}\left[{w+1\choose 2}-(w+1)w_n\right]+\sum_{i\notin R^*,j\in R^*}
\left[{w_j+1\choose 2}+\Big(w_n-\sum_{l=j}^nw_l\Big)w_j \right] \nonumber \\
&+\sum_{i\in R^*,j\notin R^*}\left[{w_i+1\choose 2}+\Big(w_n-\sum_{l=i}^nw_l\Big)w_i-w_i\right].
\end{align*}}
We claim that
\begin{equation}\label{e-L-2d}
L_2(d)=\widetilde{L}_2(d)=-dw_n+dw-\sum_{l\in T}\sum_{k=1}^{l-1}w_k.
\end{equation}

It is clear that $L_2(0)=\widetilde{L}_2(0)=0$. Therefore to show
that $L_2(d)=\widetilde{L}_2(d)$ it suffices to show that
$L_2(d)-L_2(d-1)=\widetilde{L}_2(d)-\widetilde{L}_2(d-1)$ for $d\geq
1$.

Since $w_i=0$ for $i\in T$, we have
{\small\begin{align*}
L_2(d)-L_2(d-1)
=&{w+1\choose 2}-(w+1)w_n+\sum_{j=t_d+1}^n
\left[{w_j+1\choose 2}+\Big(w_n-\sum_{l=j}^nw_l\Big)w_j \right] \nonumber \\
&\quad+\sum_{i=1}^{t_d-1}\left[{w_i+1\choose 2}+\Big(w_n-\sum_{l=i}^nw_l\Big)w_i-w_i\right] \nonumber \\
=&{w+1\choose 2}-(w+1)w_n+ \sum_{j=1}^n\left[{w_j+1\choose
2}+\Big(w_n-\sum_{l=j}^nw_l\Big)w_j \right]-\sum_{i=1}^{t_d-1}w_i.
\end{align*}}

Simplifying the above equation, we obtain {\small\begin{align*}
L_2(d)-L_2(d-1) =&{w+1\choose 2}-(w+1)w_n+
\sum_{j=1}^n{w_j+1\choose 2}+w_nw-\sum_{j=1}^n\sum_{l=j}^nw_lw_j-\sum_{i=1}^{t_d-1}w_i \nonumber \\
=&{w+1\choose 2}-w_n+
\sum_{j=1}^n{w_j+1\choose 2}-\sum_{i<j}w_iw_j-\sum_{i=1}^nw_i^2-\sum_{i=1}^{t_d-1}w_i.
\end{align*}}
Using the fact ${w+1\choose 2}=\sum_{i=1}^n{w_i+1\choose
2}+\sum_{i<j}w_iw_j$, we get
\begin{align*}
L_2(d)-L_2(d-1)
=&-w_n+2\sum_{j=1}^n{w_j+1\choose 2}-\sum_{i=1}^nw_i^2-\sum_{i=1}^{t_d-1}w_i \nonumber \\
=&-w_n+w-\sum_{i=1}^{t_d-1}w_i,
\end{align*}
which equals $\widetilde{L}_2(d)-\widetilde{L}_2(d-1)$. Thus the
claim follows.

Substituting \eqref{e-w3}, \eqref{e-w1}, and $A_1=q^{L_2(d)}$ (with
\eqref{e-L-2d}) into \eqref{e-w0} and simplifying yields
\begin{align*}
\CT&_{\mathbf{x}}Q(h^*\mid \mathbf{r^{*}};
\mathbf{k^{*}})=(-1)^{d+w}q^{L_1(d)+L_2(d)-{w+1\choose
2}}\frac{(q)_{w}(q)_{a-w}}{(q)_{a_1}\cdots (q)_{a_n}}.
\end{align*}

Therefore
{\small\begin{align*} L^*(T)=&L_1(d)+L_2(d)-{w+1\choose 2}\nonumber \\
=&dw_{n}+\sum_{l\in I\setminus
T}\sum_{i=l}^nw_i-\sum_{i=1}^{\nu}p_i\sum_{l=j_i}^nw_l-1
-dw_n+dw-\sum_{l\in T}\sum_{k=1}^{l-1}w_k-{w+1\choose 2}\nonumber \\
=&\sum_{l\in I\setminus
T}\sum_{i=l}^nw_i-\sum_{i=1}^{\nu}p_i\sum_{l=j_i}^nw_l-1 +dw-\sum_{l\in
T}\sum_{k=1}^{l-1}w_k-{w+1\choose 2}.
\end{align*}}
Since $dw$ can be written as $\sum_{l\in T}\sum_{k=1}^nw_k$, we have
{\small\begin{align*} L^*(T)=&\sum_{l\in I\setminus
T}\sum_{i=l}^nw_i-\sum_{i=1}^{\nu}p_i\sum_{l=j_i}^nw_l-1
+\sum_{l\in T}\sum_{k=l}^{n}w_k-{w+1\choose 2}\nonumber \\
=&\sum_{l\in I}\sum_{i=l}^nw_i-\sum_{i=1}^{\nu}p_i\sum_{l=j_i}^nw_l-1
-{w+1\choose 2}.
\end{align*}}
\end{proof}

\begin{proof}[Proof of  Main Lemma {\bf 2}]
 Applying Lemma
\ref{lem-almostprop} gives \eqref{qh2} as follows.
\begin{align*}
\CT_{x_0}Q(h)=\sum_{\substack{0<r_1\leq n,\\ 1\leq k_1\leq
h}}Q(h\mid r_1;k_1).
\end{align*}
Iteratively apply Lemma \ref{lem-lead1} to each summand when
applicable. In each step, we need to deal with a sum of terms like
$Q(h\mid r_1,\dots,r_s;k_1,\dots,k_s)$. For such summand, we apply
Lemma \ref{lem-lead1} with respect to $x_{r_s}$. The summand is
taken to $0$ if part (i) applies, and is taken to a sum if part (ii)
applies. In the latter case, the number of variables decreases by
one. Since there are only $n+1$ variables, the iteration terminates.
Note that if $r_s=n$ and part (ii) applies, the summand will be
taken to $0$. So finally we can write
\begin{align*}
\CT_{\mathbf{x}}Q(h)=\CT_{\mathbf{x}}\sum_{r_1,\dots,r_s,
k_1,\dots,k_s}Q(h\mid r_1,\dots,r_s;k_1,\dots,k_s),
\end{align*}
where the sum ranges over all $r$'s and $k$'s with $0<r_1<\cdots
<r_s\leq n, 1\leq k_1,k_2,\dots,k_s\leq h$ such that Lemma
\ref{lem-lead1} does not apply. Note that we may have different $s$.

By Lemma \ref{lem-lead2}, Lemma \ref{lem-lead1} does not apply only
if there is a subset $T=\{t_1,\dots,t_d\}$ of $I$ such that
$(r_1,\dots,r_s)=(1,\ldots,\widehat{t_1},\ldots,\widehat{t_d},\ldots,n)$,
and $h=a-\sigma(T)+1$. 
So the sum becomes
\begin{align*}
\CT_{\mathbf{x}}Q(h)= &\CT_{\mathbf{x}}\sum_{T} \sum_{1\leq
k_1,\ldots,k_{n-d}\leq h}Q(h\mid \mathbf{r^{*}};\mathbf{k}),
\end{align*}
where $T$ ranges over all $T\subseteq I$ such that
$a-\sigma(T)+1=h$.

For each fixed subset $T$ of $I$ as above, we show that almost every
$Q(h\mid \mathbf{r^{*}};\mathbf{k})$ vanishes. Notice that
$E_{\mathbf{r^{*}},\bk}x_i=x_n^{k_{n-d}-k_{i-N_i}}$ for $i\notin T$
with $N_i$ defined as in \eqref{defNl}. Rename the parameters $a_i$
by $w_i$ for $i\not\in T$, and set $w_i=0$ for $i\in T$. The
expression becomes easy to describe.

If $1\le k_{i-N_i}\le w_i$ for some $i\notin T$, then $Q(h\mid \mathbf{r^{*}};\mathbf{k})$ has the factor
{\small$$E_{\mathbf{r^{*}},\bk}
\left[\lrq{x_{i}}{x_{0}}{q}_{\!\!a_{i}} \right]
=\lrq{x_{n}q^{k_{n-d}-k_{i-N_i}}}{x_{n}q^{k_{n-d}}}{q}_{\!\!w_i}
=(q^{1-k_{i-N_i}})_{w_i}=0.$$}

If $-w_{j}\le k_{i-N_i}-k_{j-N_j}\le w_{i}-1$,
where $i<j$ and $i,j\notin T$, then $Q(h\mid \mathbf{r^{*}};\mathbf{k})$ has
the factor
$$E_{\mathbf{r^{*}},\bk}\, \left[\lrq{x_{i}}{x_{j}}{}_{\!\!a_{i}}\!\!\lrq{x_{j}}{x_{i}}{q}_{\!\!a_{j}}\right]
=E_{\mathbf{r^{*}},\bk}\, \left[\lrq{x_{i}}{x_{j}}
{}_{\!\!w_{i}}\!\!\lrq{x_{j}}{x_{i}}{q}_{\!\!w_{j}}\right], $$ which
is equal to {\small\begin{align*}
E_{\mathbf{r^{*}},\bk}\left[q^{\binom{w_{j}+1}{2}}
\left(-\frac{x_{j}}{x_{i}}\right)^{w_{j}}\!
\lrq{x_{i}}{x_{j}}{q^{-w_{j}}}_{\!\!w_{i}+w_{j}} \right]
=q^{\binom{w_{j}+1}{2}}
(-q^{k_{i-N_i}-k_{j-N_j}})^{w_{j}}(q^{k_{j-N_j}-k_{i-N_i}-w_{j}})_{w_{i}+w_{j}}=0.
\end{align*}}
If neither of the above two cases happen, then by Lemma
\ref{lem-import} for the case $s=n-d$, we see that $\mathbf{k}$ must
equal $\mathbf{k^{*}}$ given by
{\small\begin{equation*}
\mathbf{k^{*}}=\left(\sum_{i=r_1}^nw_i+1,
\sum_{i=r_2}^nw_i+1,\,\ldots,\sum_{i=r_{n-d}}^nw_i+1\right).
\end{equation*}}
Therefore, for every $T$, all $Q(h\mid \mathbf{r^{*}};\mathbf{k})$
vanish except for $Q(h\mid \mathbf{r^{*}};\mathbf{k^*})$. It follows
that
 $$\CT_{\mathbf{x}}Q(h^*)=\CT_{\mathbf{x}}\sum_{T}Q(h^*\mid \mathbf{r^{*}}; \mathbf{k^{*}})
 =\sum_{T}\CT_{\mathbf{x}}Q(h^*\mid \mathbf{r^{*}}; \mathbf{k^{*}}).$$
Thus the proof is completed by Lemma \ref{lem-compute}.
\end{proof}

\section{Concluding Remark}

For the equal parameter case, Stembridge \cite{stembridge1987}
studied the constant terms for general monomials $M(\mathbf{x})$ and
obtained recurrence formulas. However, explicit formulas are
obtained only for $M(\mathbf{x})=x_{j_1}^{p_1}\cdots
x_{j_{\nu}}^{p_{\nu}}\big/(x_{i_1}x_{i_{2}}\cdots x_{i_{m}})$, just as we
discussed. These formulas are called first layer formulas. For the unequal parameter case,
 our method may be used to evaluate the
constant terms for monomials like $M(\mathbf{x})=x_sx_t/x_0^2$, but
the explicit formula will be too complicated. We can expect that
other types of $q$-Dyson style constant terms can be solved in a
similar way.

\vspace{.2cm} \noindent{\bf Acknowledgments.} Lun Lv and Yue Zhou
would like to acknowledge the helpful guidance of their supervisor
William Y.C. Chen. This work was supported by the 973 Project, the
PCSIRT project of the Ministry of Education, the Ministry of Science
and Technology and the National Science Foundation of China.

\end{document}